\numberwithin{equation}{section}
\newtheorem{theorem}{Theorem}[section]
\newtheorem{lemma}[theorem]{Lemma}
\newtheorem{proposition}[theorem]{Proposition}
\newtheorem{corollary}[theorem]{Corollary}
\theoremstyle{definition}
\newtheorem{question}[theorem]{Question}
\newtheorem{remark}[theorem]{Remark}
\newtheorem{problem}[theorem]{Problem}
\newtheorem *{Theorem A}{Theorem A}
\newtheorem *{Corollary B}{Corollary B}
\newcommand{\ach}{\check{A}}
\newcommand{\C}{\mathbb{C}}
\newcommand{\la}{\langle}
\newcommand{\ra}{\rangle}
\newcommand{\Z}{\mathbb{Z}}
\begin{document}

\title
{Isotropy in group cohomology}

\author{Nir Ben David}

\address{Department of Mathematics, Technion-Israel Institute of Technology, Haifa 32000, Israel}
\email{benda@tx.technion.ac.il}

\author{Yuval Ginosar}

\address{Department of Mathematics, University of Haifa, Haifa 31905, Israel}
\email{ginosar@math.haifa.ac.il}

\author{Ehud Meir}
\address{Department of Mathematical Sciences, University of Copenhagen, Universitetsparken 5, DK-2100, Denmark}
\email{meirehud@gmail.com}

\date{\today}

\keywords{Symplectic forms, isotropic subgroups, non-degenerate cohomology classes, groups of central type,
bijective cohomology classes}

\begin{abstract}
The analogue of Lagrangians for symplectic forms over finite groups is studied, motivated by the fact that
symplectic $G$-forms
with a normal Lagrangian $N\triangleleft G$ are in one-to-one correspondence, up to inflation,
with bijective 1-cocycle data on the quotients $G/N$.
This yields a method to construct groups of central type from such quotients, known as
Involutive Yang-Baxter groups.
Another motivation for the search of normal Lagrangians comes from a non-commutative generalization of
Heisenberg liftings which require normality.

Although it is true that symplectic forms over finite nilpotent groups always admit Lagrangians,
we exhibit an example where none of these subgroups is
normal. However, we prove that symplectic forms over nilpotent groups always admit normal Lagrangians if all their
$p$-Sylow subgroups are of order less than $p^8$.
\end{abstract}

\maketitle
\begin{section}{Overview}
\subsection{}
Let $V$ be a vector space over a field $F$ and let $W\subset V$ be any subspace.
Here is a natural way to construct an alternating form on $V$ such
that $W$ is isotropic with respect to this form. Let $U:=V/W$, and
identify $U$ with a subspace of $V$ which is complementary to $W$.
Thus, every $v\in V$ is uniquely expressed as $v=w+u$, where $w\in
W$ and $u\in U$. For any $\varphi\in\check{W}:=$Hom$_F(W,F)$ and $w\in W$, we use the pairing
notation $\la \varphi,w\ra\in F$ to denote the evaluation of $\varphi$ at $w$.
Now, for any linear transformation $\pi:U\to \check{W}$
define
\begin{equation}\label{alphapi}
\begin{array}{rcl}
\alpha_{\pi}:V\times V &\rightarrow &F\\
(w_1+u_1,w_2+u_2)&\mapsto & \la\pi(u_1), w_2\ra-\la \pi(u_2),
w_1\ra.
\end{array}
\end{equation}
Then the form $\alpha_{\pi}$ is indeed alternating, with $W$
isotropic. More generally, for any
alternating form ~$\alpha':U\times U\to F$ on the quotient space
$U$, the form
\begin{equation}\label{alphapinf}
(w_1+{u_1},w_2+{u_2})\mapsto \alpha_{\pi}(w_1+{u_1},w_2+{u_2})+\alpha'(u_1,u_2)
\end{equation}
also satisfies the above requirements. The following properties
are easily verified: Firstly, $W$ is a maximal isotropic subspace
with respect to any of the alternating forms given in \eqref{alphapinf} if and only if
$\pi$ is injective. Secondly, in case dim$_FW$=dim$_FU$, then
none of the above forms admits a nontrivial radical if and only if $\pi$ is
bijective. In other words, for any subspace $W\subset V$ with
dim$_FW=\frac{1}{2}$dim$_FV$, equation \eqref{alphapinf} describes a method to
construct a symplectic form (i.e. admitting a trivial radical) on $V$ such that $W$ is a {\it
Lagrangian} with respect to this form.

Conversely, let $\alpha:V\times V \rightarrow F$ be an alternating
form with $W\subset V$ isotropic. Define a linear map
\begin{equation}\label{pialpha}
\begin{array}{rcl}
\pi_{\alpha}:U& \rightarrow& \check{W}\\
\la\pi_{\alpha}(u), w\ra&:= &\alpha({u},w).
\end{array}
\end{equation}
Then $W$ is maximal isotropic with respect to the form $\alpha$ if
and only if $\pi_{\alpha}$ is injective.
Moreover, $\alpha$ is symplectic if and only if $\pi_{\alpha}$ is
bijective, in particular, dim$_FW=\frac{1}{2}$dim$_FV$.

The maps \eqref{alphapinf} and \eqref{pialpha} may be considered as mutually inverse in the sense that
$\pi_{\alpha_{\pi}}=\pi$, and that $\alpha_{\pi_{\alpha}}$ differs from
$\alpha$ by an alternating form which is inflated from $U$.

\subsection{}
An analog is established for forms on groups. Let $G$ be a group.
A $G$-form over an abelian group $M$ is a map $\alpha:G\times G\to M$ such that for every $g\in G$, both
res$|^G_{C_G(g)}\alpha(g,-)$ and res$|^G_{C_G(g)}\alpha(-,g)$ are group homomorphisms from the centralizer $C_G(g)$ to $M$.
Given a $G$-form $\alpha$ and an element $x\in G$, it is convenient to use the following notation.
\begin{equation}\label{perp}
K_x=K_x(\alpha):=\{g\in C_G(x) | \alpha(g,x)=\alpha(x,g)=0\}.
\end{equation}
A subgroup $H<G$ is {\it isotropic} with respect to a form $\alpha:G\times G\to M$ if for every $h\in H$,
$C_H(h)= K_h$. In other words, if
the restrictions res$|^G_{C_H(h)}\alpha(h,-)$ and res$|^G_{C_H(h)}\alpha(-,h)$ to the homomorphisms from
$C_H(h)$ to $M$ are trivial for any $h\in H$.
A $G$-form $\alpha:G\times G\to M$ is
{\it symplectic} if, in addition,
\begin{itemize}
\item $\alpha(g,h)=-\alpha(h,g)$ for every $(g,h)\in G\times G$ such that $g$ and $h$ commute ($\alpha$ is alternating)
\footnote
{this condition may be slightly strengthened to $\alpha(g,g)=0$ for every $g\in G$.},
and
\item $K_g(\alpha)=C_G(g)$, that is res$|^G_{C_G(g)}\alpha(g,-)=0$, if and only if $g=e$ ($\alpha$ is non-degenerate).
\end{itemize}

Alternating forms on groups are naturally obtained from 2-cocycles.
Let $c\in Z^2(G,M)$ be a 2-cocycle with values in a trivial $G$-module $M$. Then
\begin{equation}\label{altform}
\begin{array}{rcl}
\alpha_c:G\times G &\rightarrow &M\\
(g,h)&\mapsto & c(h,g)-c(hgh^{-1},h).
\end{array}
\end{equation}
is an alternating form on $G$, called the alternating form {\it associated} to $c$ (see \cite[3.16]{BG}).
It is easy to show that if $g$ and $h$ commute,
then $\alpha_c(g,h)$ depends only on the cohomology class of $c$,
and not on the particular representative.
From now onwards our discussion is over cocycles which take their values in the multiplicative group $\C^*$
(or, alternatively, over cohomology classes in $H^2(G,\C^*)$)
rather than over arbitrary alternating $G$-forms.
In this regard, we say that a subgroup $H<G$ is isotropic with respect to $[c]\in H^2(G,\C^*)$ if
res$^G_H[c]=0\in H^2(H,\C^*)$.
\begin{remark}
The above definition of isotropy of a subgroup $H<G$ with respect to a 2-cocycle $c\in Z^2 (G,\C^*)$
says that the restriction of $c$ to $H$ is cohomologically trivial. However,
cohomologically nontrivial 2-cocycles $c\in Z^2 (G,\C^*)$ may give rise to trivial associated alternating forms $\alpha_c$.
Such cocycles belong to cohomology classes in $B_0(G)$ (the {\it Bogomolov multiplier}) in the sense of \cite{Bogo},
and are called {\it distinguished} in the sense of \cite{H90}.
\end{remark}

The non-degeneracy property for $G$-forms is much harder to attain.
A cocycle $c\in Z^2(G,\C^*)$ and its cohomology class $[c]\in H^2(G,\C^*)$ over the trivial $G$-module $\C^*$
are called {\it non-degenerate} if their associated alternating form $\alpha_c$ is symplectic.
This is the same as to say that for every $x\in G$ there exists an element $g\in C_G(x)$ such that
$c(x,g)$ and $c(g,x)$ are not equal.
Groups having a non-degenerate cohomology class are termed {\it of central type}.
These are groups of square order which admit ``large" projective representations (see Lemma \ref{tfae}).
By a deep result of R. Howlett and  I. Isaacs \cite{hi}, based on the classification of finite simple groups, it is
known that all such groups are solvable.
The following is a group-theoretic analogue of the fact that the dimension of a Lagrangian for a symplectic form on
a vector space $V$ equals $\frac{1}{2}$dim$_F(V)$.
\begin{proposition}\label{notnormal1}
Let $[c]\in H^2(G,\C^*)$ be a non-degenerate class. Then
any isotropic subgroup $H<G$ with respect to $[c]$ is of order dividing $\sqrt{|G|}$.
\end{proposition}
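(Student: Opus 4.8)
The plan is to work with the twisted group algebra $\C^c G$ (equivalently, with an irreducible projective representation of $G$ having cocycle $c$) and to exploit the characterization recorded in Lemma~\ref{tfae}: non-degeneracy of $[c]$ is equivalent to $\C^c G \cong M_n(\C)$ being the full matrix algebra, where $n^2=|G|$; in particular $\sqrt{|G|}=n$ is an integer. So I would fix a normalized representative $c\in Z^2(G,\C^*)$ and an irreducible $\C^c G$-module $S$ of dimension $n$, realized by $\rho\colon G\to GL(S)$ with $\rho(g)\rho(h)=c(g,h)\rho(gh)$ and $\rho(e)=\mathrm{id}_S$.

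The first step is to show that the projective character $\chi_\rho(g):=\operatorname{tr}_S\rho(g)$ vanishes off the identity: $\chi_\rho(e)=n$ and $\chi_\rho(g)=0$ for $g\ne e$. For this, consider the linear functional $\tau$ on $\C^c G$ returning the coefficient of the basis element $u_e$. Normalization of $c$ together with the $2$-cocycle identity (which gives $c(g,g^{-1})=c(g^{-1},g)$) shows that $\tau$ is a trace, $\tau(ab)=\tau(ba)$, and clearly $\tau(u_g)=\delta_{g,e}$. Since the traces on $M_n(\C)$ form a one-dimensional space, and $\operatorname{tr}_S$, $\tau$ take the values $n$, $1$ at the identity, we get $\operatorname{tr}_S=n\cdot\tau$, hence $\chi_\rho(g)=n\,\delta_{g,e}$.

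The second step invokes the isotropy hypothesis $\operatorname{res}^G_H[c]=0$: since $\operatorname{res}^G_H c$ is a coboundary, there is a $1$-cochain $\mu\colon H\to\C^*$ with $\mu(e)=1$ such that $h\mapsto\mu(h)\rho(h)$ is a genuine linear representation of $H$ on $S$, of dimension $n$. Its ordinary character $\chi$ satisfies $\chi(h)=\mu(h)\chi_\rho(h)=n\,\delta_{h,e}$, because the rescaling is invisible away from $e$. Consequently the multiplicity of the trivial representation of $H$ in this module, namely $\frac{1}{|H|}\sum_{h\in H}\chi(h)=\frac{n}{|H|}$, is a nonnegative integer, so $|H|$ divides $n=\sqrt{|G|}$, which is the assertion.

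The step that requires the most care is the second one — making precise that on restriction to an isotropic $H$ the projective representation becomes an honest one whose character is still supported at the identity; everything else is bookkeeping or a citation of Lemma~\ref{tfae}. An alternative route bypasses characters: arrange $c|_{H\times H}\equiv 1$, so that $\C^c G$ contains $\C H$ as a subalgebra and, decomposing $\C^c G$ along the right cosets of $H$, is free of rank $[G:H]$ as a left $\C H$-module; comparing with the decomposition $\C^c G\cong M_n(\C)\cong S^{\oplus n}$ as a left module over itself and reading off the multiplicity of the trivial $\C H$-module gives $n\mid[G:H]=n^2/|H|$, i.e.\ $|H|\mid n$ again.
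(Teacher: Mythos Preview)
Your argument is correct. Both the character computation and the module-theoretic variant establish that the degree $n=\sqrt{|G|}$ of the unique irreducible $c$-representation divides $[G:H]$, which is equivalent to the claim.

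The paper does not actually supply its own proof of this proposition: it is stated in the overview, and the later Remark simply records that it (together with Proposition~\ref{notnormal}) may be deduced from \cite[Main Theorem]{h88}. What you have written is, in effect, a self-contained derivation of the special case of Higgs's theorem that is needed here --- that the degree of an irreducible projective representation divides the index of any isotropic subgroup --- specialized to the situation where $\C^cG$ is simple. So your route is more explicit and more elementary than the citation the paper relies on; it trades a black-box reference for a short direct calculation with the trace on $\C^cG$ (or, in your alternative, with the left $\C H$-module structure of $\C^cG$). Either version would serve perfectly well as an in-text proof.
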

An isotropic subgroup $H<G$ with respect to a non-degenerate class $[c]\in H^2(G,\C^*)$ is a {\it Lagrangian}
with respect to this class if $|H|=\sqrt{|G|}$.
In particular, by Proposition \ref{notnormal1}, if a Lagrangian exists, then it is maximal isotropic.

In order to imitate the above vector space constructions in group-theoretic terms,
we assume that $G$ admits a normal subgroup $A$ of the same order as that of $Q:=G/A$ (in
particular $G$ is of square order). We seek after a non-degenerate class $[c]\in H^2(G,\C^*)$
with $A$ maximal isotropic.
Apparently, if $A\vartriangleleft G$ is isotropic with respect to a non-degenerate class
$[c]\in H^2(G,\C^*)$, then it is necessarily abelian (see Lemma \ref{abelian}).

The linear transformation $\pi:U\to \check{W}$ is replaced now by a 1-cocycle $$\pi:Q\to \ach.$$
Here $\check{A}:=$Hom$(A,\C^*)$ is endowed with the diagonal $Q$-action, which is induced from the action of $Q$ on
$A$ (by conjugation in $G$), namely
\begin{equation}\label{diag}
\la q(\chi),a\ra:=\la\chi,q^{-1}(a)\ra,
\end{equation}
for every $q\in Q, \chi\in \ach$ and $a\in A$.

To begin with, suppose that $A$ admits a complement in $G$, that is $G=A\rtimes Q$.
P. Etingof and S. Gelaki \cite{eg3} observed that any 1-cocycle
$\pi\in Z^1(Q,\ach)$ gives rise to a 2-cocycle
\begin{equation}\label{eg}
\begin{array}{rcl}c_{\pi}:G\times G&\to &\C^*\\
(a_1q_1,a_2q_2)&\mapsto &\la\pi(q_1),q_1(a_2)\ra^{-1}(=\la\pi(q_1^{-1}),a_2\ra),
\end{array}
\end{equation}
with an associated alternating form
(compare with \eqref{alphapi})
$$\alpha_{c_{\pi}}(a_1q_1,a_2q_2)=\la\pi(q_1),q_1(a_2)\ra\cdot\la\pi(q_2),q_2(a_1)\ra^{-1}, a_i\in A,q_i\in Q,$$
admitting $A$ as an isotropic subgroup.
Moreover, $\pi$ is bijective if and only if $c_{\pi}$ (and hence also $\alpha_{c_{\pi}}$) is non-degenerate.

Conversely, let $c\in Z^2(G,\C^*)$ with $A$ isotropic. Define
\begin{equation}\label{pi}
\begin{array}{rcl}
\pi_c=\pi_{[c]}:Q& \rightarrow& \ach\\
\la\pi_{c}(q), a\ra&:= &\alpha_c(q,a),
\end{array}
\end{equation}
for every $a\in A$ and any $q\in Q$. Then $\pi_c$ is a 1-cocycle (compare with \eqref{pialpha}).
Moreover, $\pi_c$ is bijective if and only if $c$ (or $\alpha_{c}$) is non-degenerate.

Again we obtain the mutually inverse property in
the sense that $\pi_{c_{\pi}}=\pi$, and that $c_{\pi_{c}}$ differs from $c$ by an alternating form inflated from $Q$.

We remark that groups admitting a bijective 1-cocycle,
namely {\it involutive Yang-Baxter} (IYB) groups,
are a key ingredient in the study of set-theoretic solutions of the quantum Yang-Baxter equation (see \cite{CJdR,ess}).

The correspondence between $G$-forms with $A\vartriangleleft G$ abelian isotropic
(modulo the $G$-forms inflated from $Q=G/A$)
and classes in $H^1(Q,\ach)$ still holds
even when the quotient $Q$ does not embed in $G$ as a complement of $A$,
though is more complicated. To formulate it we need the following notation.
For a group extension \begin{equation}\label{exten} [\beta]:1\to A\to G\to Q\to 1, \ \ [\beta]\in H^2(Q,A),
\end{equation} let
$$
{\rm res}^G_A:H^2(G,\C^*)\to H^2(A,\C^*),\ \ {\rm inf}^Q_G:H^2(Q,\C^*)\to H^2(G,\C^*)
$$
be the restriction
and inflation maps respectively.
Next, let
$$\mathcal{K}_{\beta}:=\{[\pi]\in H^1(Q,\ach)|\ \  [\beta] \cup [\pi]=0\in H^3(Q,\C^*)\}$$
be the subgroup of classes in $H^1(Q,\ach)$ annihilating the cup product with $[\beta]$. In \cite{BG} it is shown that
bijectivity is a cohomology class property, and that when $|A|=|Q|$, the non-degeneracy property is independent of the
representative modulo inflations.
We have
\begin{theorem}\label{main2} \cite[Theorem A]{BG}
Let \eqref{exten}
be an extension of finite groups, where $A$ is abelian.
Then there is an isomorphism
\begin{equation}\label{iso}
\mathcal{K}_{\beta}\simeq\ker({\rm res}^G_A) / [{\rm im}({\inf}^Q_G)].
\end{equation}
If, additionally, $|A|=|Q|$, then the isomorphism \eqref{iso} induces a 1-1 correspondence between bijective classes
in $\mathcal{K}_{\beta}$, and non-degenerate classes in $\ker({\rm res}^G_A)/[{\rm im}({\inf}^Q_G)]$.
\end{theorem}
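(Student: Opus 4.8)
The plan is to read off the isomorphism \eqref{iso} from the low-degree part of the Lyndon--Hochschild--Serre spectral sequence of the extension \eqref{exten} with coefficients in the trivial module $\C^*$,
\[
E_2^{p,q}=H^p(Q,H^q(A,\C^*))\ \Longrightarrow\ H^{p+q}(G,\C^*).
\]
Since $A$ is abelian, $H^0(A,\C^*)=\C^*$ and $H^1(A,\C^*)=\ach$ carries the $Q$-action \eqref{diag}, so in total degree $2$ the relevant entries are $E_2^{2,0}=H^2(Q,\C^*)$ and $E_2^{1,1}=H^1(Q,\ach)$, together with $E_2^{3,0}=H^3(Q,\C^*)$, the target of the only possibly nonzero differential emanating from $E_2^{1,1}$. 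Writing $F^\bullet$ for the induced decreasing filtration on $H^2(G,\C^*)$, the edge map $H^2(G,\C^*)\to E_2^{0,2}\subseteq H^2(A,\C^*)$ is $\mathrm{res}^G_A$, so $\ker(\mathrm{res}^G_A)=F^1$, while $\inf^Q_G$ factors as $E_2^{2,0}\twoheadrightarrow E_\infty^{2,0}\cong F^2\hookrightarrow H^2(G,\C^*)$, so $\mathrm{im}(\inf^Q_G)=F^2$. Hence the target of \eqref{iso} is $F^1/F^2\cong E_\infty^{1,1}$.

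The next step is to compute $E_\infty^{1,1}$. Since the differential into $E_2^{1,1}$ starts at $E_2^{-1,2}=0$ and the $d_3$ leaving it lands in $E_3^{4,-1}=0$, one gets $E_\infty^{1,1}=\ker\bigl(d_2\colon H^1(Q,\ach)\to H^3(Q,\C^*)\bigr)$. The crucial point is to identify this $d_2$ on the row $q=1$ with cup product against the extension class, i.e.\ $d_2[\pi]=\pm\,[\beta]\cup[\pi]$, the cup product being formed through the evaluation pairing $A\otimes\ach\to\C^*$. Granting this, $\ker d_2=\{[\pi]:[\beta]\cup[\pi]=0\}=\mathcal K_\beta$ by definition, which establishes \eqref{iso}. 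To check that the resulting abstract isomorphism is the concrete assignment $[c]\mapsto[\pi_c]$ of \eqref{pi}, I would take a representative $2$-cocycle $c$ with $c|_{A\times A}\equiv 1$ (available precisely because $[c]\in\ker(\mathrm{res}^G_A)$) and follow the zig-zag computing its class in $E_\infty^{1,1}$, comparing with the formula \eqref{altform} for $\alpha_c$; this simultaneously yields the ``mutually inverse'' relations stated in the text.

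For the last assertion, assume $|A|=|Q|$, so that $|\ach|=|A|=|Q|$ and a set map $Q\to\ach$ is bijective iff injective iff surjective. Given $[c]\in\ker(\mathrm{res}^G_A)$, pick a representative with $c|_{A\times A}\equiv 1$; then $\C^cA=\C A\cong\prod_{\chi\in\ach}\C$ is a commutative subalgebra of the twisted group algebra $\C^cG$, which is accordingly a $Q$-crossed product over $\C A$. Conjugation by a lift of $q\in Q$ is an algebra automorphism of $\C A$ (this is where $c|_{A\times A}\equiv1$ enters), and on $\mathrm{Spec}(\C A)=\ach$ it is the natural action \eqref{diag} composed with multiplication by the character $\pi_c(q)\in\ach$ (or its inverse), as one checks from \eqref{altform} and \eqref{pi}; in other words the induced $Q$-action on $\ach$ is affine, and the orbit of the trivial character is, up to inversion, the image of $\pi_c$. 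By the Clifford theory of crossed products, $\C^cG=\bigoplus_{\mathcal O}e_{\mathcal O}\,\C^cG$ over the $Q$-orbits $\mathcal O\subseteq\ach$, with $e_{\mathcal O}\C^cG\cong M_{|\mathcal O|}\bigl(e_\chi\C^cG\,e_\chi\bigr)$ for $\chi\in\mathcal O$; hence $\C^cG$ is simple iff $Q$ acts transitively on $\ach$, in which case $|Q|=|\ach|$ forces the action to be regular and $e_\chi\C^cG\,e_\chi\cong\C$. Therefore $\C^cG$ is simple iff $\pi_c$ is surjective, equivalently bijective. Finally, $\C^cG$ is simple iff $[c]$ is non-degenerate by Lemma \ref{tfae}, and bijectivity of $\pi_c$, being a property of the cohomology class $[\pi_c]$, depends only on $[c]$ modulo inflations by the first part; combining these two equivalences gives the asserted correspondence.

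I expect the main obstacle to be the key step of the first part: proving that $d_2$ on the row $q=1$ of the spectral sequence is exactly cup product against $[\beta]$ via the evaluation pairing --- in particular, that the trivial coefficient module $\C^*$ on $A$ leaves no residual ``Charlap--Vasquez'' correction term --- and that the abstract isomorphism thereby produced coincides with the explicit map \eqref{pi}. By contrast, the passage from non-degeneracy to simplicity of $\C^cG$ and the orbit count for the crossed product are routine.
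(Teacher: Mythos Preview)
The paper does not contain a proof of this theorem: it is quoted verbatim from \cite[Theorem A]{BG} and used as a black box. There is therefore nothing in the present paper to compare your argument against.

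That said, your outline is a sound approach and almost certainly close to what \cite{BG} does. The identification
\[
\ker(\mathrm{res}^G_A)/\mathrm{im}(\inf^Q_G)\;\cong\;E_\infty^{1,1}\;=\;\ker\bigl(d_2\colon H^1(Q,\ach)\to H^3(Q,\C^*)\bigr)
\]
from the Lyndon--Hochschild--Serre spectral sequence is exactly the right framework, and you have correctly isolated the one nontrivial step: showing that this $d_2$ on the row $q=1$ is cup product with the extension class $[\beta]$ via the evaluation pairing $A\otimes\ach\to\C^*$. This is a known fact (indeed for $d_2^{0,1}$ it is essentially the classical description of transgression, and the general $d_2^{p,1}$ follows from multiplicativity of the spectral sequence or directly from the Charlap--Vasquez formula, whose correction term vanishes because $A$ acts trivially on $\C^*$), but it does require a reference or a careful cochain computation; your caveat about this is well placed. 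The check that the spectral-sequence isomorphism agrees with the explicit map $[c]\mapsto[\pi_c]$ of \eqref{pi} is a routine zig-zag once a normalized representative with $c|_{A\times A}\equiv 1$ is chosen.

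For the second assertion your crossed-product/Clifford argument is correct. One small point worth making explicit: simplicity of $\C^cG$ implies transitivity of the $Q$-action on $\ach$ unconditionally, but the converse uses $|Q|=|\ach|$ to force the stabilizers to be trivial (otherwise one must also check non-degeneracy of the residual cocycle on the stabilizer). You state this, but it is the place where the hypothesis $|A|=|Q|$ is genuinely used. Also, the claim that the conjugation action on $\mathrm{Spec}(\C A)=\ach$ is the affine action $\chi\mapsto q(\chi)\cdot\pi_c(q)^{\pm 1}$ deserves a one-line verification from \eqref{act} and \eqref{pi}; once that is done, the orbit of the trivial character is exactly the image of $\pi_c$ (or its inverse), and the equivalence ``$\pi_c$ bijective $\Leftrightarrow$ action transitive $\Leftrightarrow$ $\C^cG$ simple $\Leftrightarrow$ $[c]$ non-degenerate'' follows. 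The cohomology-invariance of bijectivity, which you invoke, is immediate from the fact that cohomologous $1$-cocycles give affine actions conjugate by a translation of $\ach$, hence with the same orbit structure.
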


Theorem \ref{main2} actually describes all groups of central type which have a non-degenerate form
that admits a normal Lagrangian.

When the extension \eqref{exten} splits, then certainly $[\beta] \cup [\pi]=0$
for every $[\pi]\in H^1(Q,A^*)$, and the correspondence in Theorem
\ref{main2} amounts to the one determined by \eqref{eg} and \eqref{pi}.

An intriguing question arises following Theorem \ref{main2}:
\begin{question}\label{q}
Let $[c]\in H^2(G,\C^*)$ be a non-degenerate class.
Does $[c]$ admit a Lagrangian, that is a maximal isotropic subgroup $A<G$ of order $\sqrt{|G|}$?
Moreover, does $[c]$ admit a normal (and hence abelian) Lagrangian $A\lhd G$?
\end{question}
If a non-degenerate class $[c]\in H^2(G,\C^*)$
gives an affirmative answer to Question \ref{q}, then by Theorem \ref{main2},
the corresponding quotient $G/A$ is an IYB group,
admitting a bijective 1-cocycle datum determined
by $[c]$.
\subsection{}\label{hlift} The motivation of Question \ref{q} stems also from lifting problems in classical
representation theory. The connection is briefly introduced hereby, and is discussed in more details in \cite{GOS}.

Let $N\vartriangleleft G$ be a normal subgroup and let
$\eta\in$Hom$(N,\C^*)^G$ be a 1-dimensional representation of $N$ which is stabilized by $G$, that is
$$\eta(n)=\eta(gng^{-1}), \forall n\in N, g\in G.$$
Then $(G,N,\eta)$ is a character triple (see \cite[page 186]{I}).
We say that a $G$-representation $\eta'$ {\it lies above} $\eta$
if the restriction of
$\eta'$ to $N$ admits $\eta$ as a constituent. Aspects of the following problem were thoroughly studied.
\begin{problem}
Describe the irreducible $G$-representations which lie above $\eta$.
\end{problem}
Next, the transgression map (see \cite[\S 1.1]{K2}) sends $\eta$ to a cohomology class tra$(\eta)\in H^2(G/N,\C^*)$,
which determines the obstruction for extending $\eta$ to a 1-dimensional representation of any intermediate group $N<H<G$
as follows.
Since the sequence
$$\text{Hom}(G,\C^*)\xrightarrow{\text{res}}\text{Hom}(N,\C^*)^G\xrightarrow{\text{tra}} H^2(G/N,\C^*)$$
is exact \cite[Theorem 1.1.12]{K2}, then $\eta$ is restricted from a morphism $\eta_0\in$ Hom$(H,\C^*)$ for every
subgroup $H/N<G/N$ which is {\it isotropic} with respect to tra$(\eta)$.
The morphism $\eta_0$ is determined up to the image of an inflation from $G/N$ (in $|(G/N)_{\text {ab}}|$ many ways).

The next step is to lift $\eta_0\in$ Hom$(H,\C^*)$ to a $G$-representation.
Suppose first that $G/N$ is a vector space over a finite field.
Then, as explained in \cite[Proposition 8.3.3]{BF}, the induction of $\eta_0$ from $H$ to $G$ is an irreducible
$G$-representation (of dimension $|G/H|=\sqrt{|G|}$)
if and only if the subspace $H/N$ is {\it maximal} isotropic with respect to tra$(\eta)$.
Moreover, if the form tra$(\eta)\in H^2(G/N,\C^*)$ is symplectic, then any maximal isotropic subspace $H$ is a Lagrangian.
In this case $|G/H|=|H/N|$ and so the irreducible $G$-representation ind$_H^G(\eta_0)$,
lying above $\eta$, depends neither on the choice of
$H$ nor on the extension $\eta_0\in$ Hom$(H,\C^*)$.
This procedure is termed {\it Heisenberg lifting} as its
guiding example yields a complete description of the
irreducible representations of the Heisenberg groups $\mathcal{H}_p$ as follows.
The $\mathcal{H}_p$-representations are computed using the central extension
$$1\to N\to \mathcal{H}_p \to \mathcal{H}_p/N \to 1,$$
where $N$ is cyclic of order $p$ and $\mathcal{H}_p/N$ is a 2-dimensional vector space over the field $\mathbb{F}_p$.
Since $N$ is central, all its representations are $\mathcal{H}_p$-invariant. The trivial and
the non-trivial $N$-representations are handled differently.
The trivial representation gives rise to a trivial class in $H^2(\mathcal{H}_p/N,\C^*)$
and therefore extends as a 1-dimensional representation all the way to $\mathcal{H}_p$, which is isotropic.
By that we obtain $p^2$ representations of dimension 1 for the Heisenberg group.
The other $p-1$ non-trivial representations of $N$ give rise to symplectic forms on $\mathcal{H}_p/N$
(or to non-degenerate classes in $H^2(\mathcal{H}_p/N,\C^*)$).
Hence, each one of them is extended to a pre-image of a Lagrangian in $\mathcal{H}_p/N$ and then induced to
$\mathcal{H}_p$.
By that we obtain $p-1$ representations of dimension $p=\sqrt{|\mathcal{H}_p/N|}$.
This exhausts all the irreducible representations of the Heisenberg group.

While the procedure of inducing the representation from a pre-image $H$ of a maximal isotropic subspace
$H/N$ to the group $G$ can easily be fitted for any abelian group $G/N$ (not necessarily a vector space), the
non-commutative case is fairly subtle. As the following theorem suggests, when $G/N$ is non-abelian,
one needs to demand normality of the maximal isotropic subgroup $H/N$ so as to induce an irreducible $G$-representation.
\begin{theorem}\cite{GOS}
Let $N\vartriangleleft G$, let $\eta\in$ Hom$(N,\C^*)^G$,
and let $H/N\vartriangleleft G/N$ be a maximal isotropic subgroup with respect to the class tra$(\eta)\in H^2(G/N,\C^*)$.
Then for every extension $\eta_0\in$ Hom$(H,\C^*)$, the $G$-representation ind$_H^G(\eta_0)$ is irreducible.
If, additionally, $|G/H|=|H/N|$ (and then $H/N$ is a Lagrangian with respect to tra$(\eta)$),
then the lifting ind$_H^G(\eta_0)$ depends only on $\eta$.
\end{theorem}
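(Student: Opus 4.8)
The plan is to derive irreducibility from Mackey's criterion, exploiting the normality of $H$, and to derive the independence statement from the Clifford correspondence between $G$-representations above $\eta$ and modules over a twisted group algebra of $G/N$. Since $H/N\vartriangleleft G/N$, the subgroup $H$ is normal in $G$, so ${}^{g}H=gHg^{-1}=H$ and $H\cap{}^{g}H=H$ for every $g\in G$. As $\eta_{0}$ is a one-dimensional (hence irreducible) character of $H$, Mackey's irreducibility criterion reduces the irreducibility of $\mathrm{ind}_{H}^{G}(\eta_{0})$ to the assertion that the characters $\eta_{0}$ and ${}^{g}\eta_{0}$ (given by $h\mapsto\eta_{0}(g^{-1}hg)$) of $H$ are distinct for every $g\in G\setminus H$. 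Since $\eta$ is $G$-invariant, both of these characters restrict to $\eta$ on $N$, so it suffices to prove that ${}^{g}\eta_{0}=\eta_{0}$ on $H$ forces $g\in H$.

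So assume ${}^{g}\eta_{0}=\eta_{0}$ on $H$ and set $H':=\langle H,g\rangle=H\langle g\rangle$, a subgroup of $G$ containing $N$, with $H\vartriangleleft H'$ and $H'/H$ cyclic. The stabilizer of $\eta_{0}$ under the conjugation action of $H'$ on $\mathrm{Hom}(H,\C^{*})$ contains $H$ (a linear character is invariant under conjugation by its own group) and contains $g$ (our hypothesis), hence equals $H'$; thus $\eta_{0}$ is $H'$-invariant. The obstruction to extending an $H'$-invariant linear character of $H\vartriangleleft H'$ to a linear character of $H'$ lies in $H^{2}(H'/H,\C^{*})$, which vanishes because $H'/H$ is cyclic. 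Therefore $\eta_{0}$, and in particular $\eta$, extends to a linear character of $H'$. By exactness of the transgression sequence $\mathrm{Hom}(H',\C^{*})\to\mathrm{Hom}(N,\C^{*})^{H'}\to H^{2}(H'/N,\C^{*})$ together with the naturality of the transgression under restriction to subgroups lying between $N$ and $G$, we obtain $\mathrm{res}^{G/N}_{H'/N}\mathrm{tra}(\eta)=\mathrm{tra}_{H'/N}(\eta)=0$; that is, $H'/N$ is isotropic with respect to $\mathrm{tra}(\eta)$. By maximality of $H/N$ we conclude $H'/N=H/N$, i.e.\ $g\in H$, which proves irreducibility.

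For the final assertion, assume $|G/H|=|H/N|$ and write $\overline{G}:=G/N$, so $|\overline{G}|=|H/N|^{2}$. Note that $\mathrm{ind}_{H}^{G}(\eta_{0})$ restricts to $N$ as a multiple of $\eta$, since $N\vartriangleleft G$, $N\leq H$, and $\eta$ is $G$-invariant; by Clifford theory the $G$-representations with this property correspond to modules over the twisted group algebra $\C^{c}[\overline{G}]$, where $c$ is a $2$-cocycle representing $\mathrm{tra}(\eta)$ (for a suitable sign convention), and the number of irreducible $G$-representations above $\eta$ equals the number of simple $\C^{c}[\overline{G}]$-modules. Since $\dim\eta=1$, this correspondence preserves dimensions, so the irreducible module $\mathrm{ind}_{H}^{G}(\eta_{0})$, of dimension $[G:H]=|H/N|$, corresponds to a simple $\C^{c}[\overline{G}]$-module of dimension $|H/N|=\sqrt{|\overline{G}|}$. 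As $\dim_{\C}\C^{c}[\overline{G}]=|\overline{G}|$ equals the sum of the squares of the dimensions of the simple $\C^{c}[\overline{G}]$-modules, such a module can exist only when $\C^{c}[\overline{G}]\cong M_{\sqrt{|\overline{G}|}}(\C)$, i.e.\ when $\mathrm{tra}(\eta)$ is non-degenerate (so $H/N$ is indeed a Lagrangian, as asserted), in which case $\C^{c}[\overline{G}]$ has a unique simple module. Consequently $G$ has exactly one irreducible representation lying above $\eta$, and hence $\mathrm{ind}_{H}^{G}(\eta_{0})$ is independent of the choice of the normal Lagrangian $H/N$ and of the extension $\eta_{0}$, depending only on $\eta$.

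The bookkeeping — locating the extension obstruction in $H^{2}(H'/H,\C^{*})$, the vanishing of this group for cyclic quotients, and the naturality of the transgression — is routine. The step I expect to require most care, and where the hypothesis $|G/H|=|H/N|$ is genuinely consumed, is the Clifford correspondence with the ensuing dimension count: identifying the $\C^{c}[\overline{G}]$-module attached to $\mathrm{ind}_{H}^{G}(\eta_{0})$, matching its dimension with $[G:H]$ via $\dim\eta=1$, and recognizing ``a simple module of dimension $\sqrt{|\overline{G}|}$'' as exactly the non-degeneracy condition recorded in Lemma \ref{tfae}.
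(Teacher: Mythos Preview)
The paper does not actually contain a proof of this theorem: it is quoted from \cite{GOS}, a reference marked \emph{in preparation}, and the surrounding discussion only records the statement and an example showing normality is necessary. There is therefore no ``paper's own proof'' to compare against.

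That said, your argument is sound and would serve as a proof. The irreducibility half is exactly Mackey's criterion specialized to normal $H$: you correctly identify that it amounts to showing the inertia group of $\eta_{0}$ in $G$ is $H$ itself, and the passage from ``$g$ stabilizes $\eta_{0}$'' to ``$H'/N$ is isotropic'' via the vanishing of $H^{2}(\text{cyclic},\C^{*})$ and naturality of transgression is clean. The independence half is also correct: the Clifford correspondence for the character triple $(G,N,\eta)$ identifies $\mathrm{Irr}(G\mid\eta)$ with the simple $\C^{c}[\overline{G}]$-modules, and since $\dim\eta=1$ this bijection preserves dimensions; your dimension count then forces $\C^{c}[\overline{G}]$ to be simple via Lemma~\ref{tfae}, yielding uniqueness.

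One minor remark: in the last paragraph you flag the Clifford step as the one ``requiring most care,'' but in fact it is entirely standard once $\dim\eta=1$ is noted (the algebra $e_{\eta}\C[G]$, with $e_{\eta}$ the central idempotent of $\C[N]$ attached to $\eta$, is literally a twisted group algebra of $G/N$). The genuinely delicate point, which you handle correctly, is the first half: ensuring that stabilizing $\eta_{0}$ forces isotropy of the larger subgroup $H'/N$, which is where the \emph{maximal isotropic} hypothesis is consumed.
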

In \cite{GOS} an example is given where $H/N$ is maximal isotropic
with respect to a non-degenerate class tra$(\eta)\in H^2(G/N,\C^*)$. This subgroup is not normal in $G/N$,
and the representation ind$_H^G(\eta_0)$ ($\eta_0\in$ Hom$(H,\C^*)$ an extension of $\eta$) is reducible.
\subsection{}\label{indu}
In order to deal with Question \ref{q},
we go back to the vector space setup and present a standard way to construct an isotropic
subspace for an alternating bilinear form
$$\alpha:V\times V \rightarrow F.$$
Let $W:=$Span$_F\{v\}$, where $v\in V$ is any non-zero element, let $\omega:V\to V/W$ be the natural projection
and let $U\subset V$ be any complement of $W$.
Then with the notation of \eqref{pialpha} (clearly, the subspace $W$ is isotropic with respect to $\alpha$),
the subspace ker$(\pi_{\alpha})\subset V$ is either $V$ if $v$ is in
the radical of $\alpha$, or of codimension 1 otherwise. Then it is not hard to check that the restriction of the form
$\alpha$ to ker$(\pi_{\alpha})\subset V$ is inflated from an alternating bilinear form
$\alpha':V'\times V' \rightarrow F$, where $V':=\omega($ker$(\pi_{\alpha}))\subset V/W$.
Moreover, if the form $\alpha$ on $V$ is symplectic, then so is the form
$\alpha'$ on a space $V'$ whose dimension is strictly smaller than dim$_F(V)$.
Assume by induction on the vector space dimension that $\alpha'$ admits
an isotropic subspace $W'\subset V'$. Then the subspace
$\omega^{-1}(W')\subset V$ is an isotropic subspace with respect to $\alpha$. Further, if $W'$ is maximal, then so is
$\omega^{-1}(W')$.
In case $\alpha$ is symplectic, the various choices of
a 1-dimensional space $W$ and its complement $U$ (and by that the choice of $\pi_{\alpha}$) at each stage yield all
the Lagrangians for $\alpha$.

\subsection{} Our first concern in this paper, after a short preliminary \S\ref{conj},
is to imitate the above procedure (\S\ref{indu}) in a group-theoretic setup.
At every stage we need to choose a central element and hence we assume that the groups
we are dealing with are nilpotent or, rather,
$p$-groups. Such groups can help us gain a better
understanding of the general picture. Indeed, if $G$ is a group of central type with $[c]\in H^2(G,\C^*)$ non-degenerate,
then the restriction of $[c]$ to any $p$-Sylow subgroup of $G$ is also non-degenerate \cite[Lemma 2.7]{P}.

The group-theoretic analogue of the procedure in \S\ref{indu}, is described in \S\ref{Lagrangian} and
yields a positive answer to the weaker part (relaxing the normality condition) of Question \ref{q}:
\begin{proposition}\label{notnormal}
Let $[c]\in H^2(G,\C^*)$ be any class, where $G$ is nilpotent with $|G|\geq n^2$.
Then $[c]$ admits an isotropic subgroup $H<G$ with $|H|=n$.
In particular, non-degenerate classes over nilpotent groups admit a Lagrangian.
\end{proposition}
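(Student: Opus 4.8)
The plan is to transcribe into group-theoretic language the induction over the dimension sketched in \S\ref{indu}, a central element of order $p$ taking the role of the line $W$, and to route the crucial descent step through Theorem \ref{main2}. Since $G$ is nilpotent it is the internal direct product $\prod_p G_p$ of its Sylow subgroups, every subgroup of $G$ is correspondingly a product of $p$-subgroups $H_p\le G_p$, one has $H^2(G,\C^*)\cong\bigoplus_p H^2(G_p,\C^*)$, and such an $H=\prod_p H_p$ is isotropic with respect to $[c]=([c_p])_p$ precisely when each $H_p$ is isotropic with respect to $[c_p]$; one may therefore assume $G$ is a $p$-group of order $p^m$ and $n=p^j$ with $2j\le m$. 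For the final assertion one additionally invokes \cite[Lemma 2.7]{P}: a non-degenerate $[c]$ restricts non-degenerately to each $G_p$, so each $G_p$ is of central type and $|G_p|$ a perfect square, and a product of Lagrangians of the $G_p$'s is a Lagrangian of $G$.

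So let $G$ be a $p$-group of order $p^m$ and argue by induction on $m$; the cases $m\le 1$ are trivial, since $H^2$ of a cyclic group into $\C^*$ vanishes and so \emph{every} subgroup is isotropic. For $m\ge 2$, pick $z\in Z(G)$ of order $p$ and put $N:=\langle z\rangle$; since $H^2(N,\C^*)=0$, $N$ itself is isotropic, and by \eqref{pi} the extension $1\to N\to G\to G/N\to 1$ is accompanied by a $1$-cocycle $\pi_{[c]}\in H^1(G/N,\check N)$ with $\la\pi_{[c]}(\bar g),z\ra=\alpha_c(g,z)$, a value depending only on $[c]$ because $z$ is central. Now $g\mapsto\alpha_c(g,z)$ is a homomorphism $\chi_z\colon G\to\C^*$ and $\chi_z(g)^p=\alpha_c(g,z^p)=\alpha_c(g,e)=1$ --- both facts instances of the $G$-form axioms, valid because $z$, being central, lies in $C_G(g)$ and $g$ lies in $C_G(z)$ for every $g$ --- so $\chi_z$ has image of order $1$ or $p$ and $K:=\ker\chi_z$ has index $1$ or $p$ in $G$, with $N\le K$. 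By the very definition of $K$ the cocycle $\pi_{[c]}$ vanishes identically on $K/N$, hence Theorem \ref{main2}, applied to the sub-extension $1\to N\to K\to K/N\to 1$, shows that $\mathrm{res}^G_K[c]$ is inflated from a class $[\bar c]\in H^2(K/N,\C^*)$.

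Since $|K/N|\ge p^{m-2}$, the inductive hypothesis gives an isotropic subgroup $\bar H\le K/N$ for $[\bar c]$ with $|\bar H|\ge p^{\lfloor(m-2)/2\rfloor}$. Let $H\le K$ be the preimage of $\bar H$, so $N\le H$ and $|H|=p\,|\bar H|\ge p^{\lfloor m/2\rfloor}$. Then $H$ is isotropic with respect to $[c]$:
$$\mathrm{res}^G_H[c]=\mathrm{res}^K_H\bigl(\mathrm{inf}^{K/N}_K[\bar c]\bigr)=\mathrm{inf}^{H/N}_H\bigl(\mathrm{res}^{K/N}_{\bar H}[\bar c]\bigr)=0,$$
using the standard compatibility of inflation with restriction along the normal subgroup $N$ and the isotropy of $\bar H$. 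Replacing $H$ by a subgroup of order $p^{\lfloor m/2\rfloor}$ (and then by one of order $p^j$) completes the induction.

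The main obstacle is the descent step. It is \emph{not} enough to observe that $z$ lies in the radical of the associated form $\alpha_c|_K$ and conclude that $\mathrm{res}^G_K[c]$ is inflated from $K/N$, because a class can have trivial associated form while remaining cohomologically non-trivial --- the Bogomolov-multiplier, or distinguished-cocycle, phenomenon of the Remark. What legitimizes the step is exactly the passage from $G$ to $K=\ker\chi_z$, which is the precise condition making $\pi_{[c]}$ vanish on $K/N$, so that Theorem \ref{main2} yields \emph{genuine} inflation; and this passage costs at most one factor $p$ in $[G:K]$, which together with the factor $p$ carried by $N$ is exactly offset by the factor $p$ lost in replacing $\sqrt{|G|}$ by $\sqrt{|K/N|}$ --- so the bound $p^{\lfloor m/2\rfloor}$ propagates through the induction. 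The remaining points --- well-definedness of $\chi_z$ and of $\pi_{[c]}$, and the inflation--restriction identity above --- are routine.
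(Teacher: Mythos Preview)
Your argument is correct and follows the same inductive scheme as the paper's own proof (Proposition \ref{nn}): reduce to $p$-groups, pick a central element $z$ of order $p$, pass to the kernel $K$ of $g\mapsto\alpha_c(g,z)$, descend to $K/\langle z\rangle$, and lift an isotropic subgroup obtained by induction. The one substantive difference is the justification of the descent step: the paper invokes Read's result \cite[Theorem 4.4]{R} to conclude that $\mathrm{res}^G_K[c]$ is inflated from $K/\langle z\rangle$, whereas you route this through Theorem \ref{main2}, observing that the associated $1$-cocycle $\pi_{[c]}$ vanishes on $K/\langle z\rangle$ and hence the class lies in $\mathrm{im}(\inf)$. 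Both work; your route has the virtue of staying within the paper's own framework and making explicit (via your closing paragraph on the Bogomolov-multiplier obstruction) exactly why the passage to $K$ is necessary before one can inflate.
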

\begin{remark}
Both Propositions \ref{notnormal1} and \ref{notnormal} may be deduced from, e.g., \cite[Main Theorem]{h88}.
\end{remark}
When normality is imposed on isotropic subgroups,
the group-theoretic procedure in \S\ref{Lagrangian} yields a lower bound for the order of such groups
(Proposition \ref{smallisotropic}).
This bound retrieves the desired value $\sqrt{|G|}$ when the order of the nilpotent group $G$ is
free of eighth powers. Such groups give a positive answer to Question \ref{q}:
\begin{theorem}\label{positive}
Let $[c]\in H^2(G,\C^*)$ be a non-degenerate class, where $G$ is a nilpotent group whose order is free of 8-th powers.
Then $[c]$ admits a normal Lagrangian.
Consequently, any nilpotent group of central type $G$ of such order admits a short exact sequence \eqref{exten},
where $A$ is abelian of order $\sqrt{|G|}$, and $Q$ is an IYB group.
\end{theorem}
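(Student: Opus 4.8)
The plan is to reduce the statement to a single $p$-group, bound the order of that $p$-group using the hypothesis, run the group-theoretic form of the geometric induction of \S\ref{indu} (this is what Proposition \ref{smallisotropic} does), and finally read off the IYB assertion from Theorem \ref{main2}. I would first reduce to $p$-groups. Since $G$ is nilpotent, $G=\prod_p G_p$ is the direct product of its Sylow subgroups, and $[c_p]:=\mathrm{res}^G_{G_p}[c]$ is non-degenerate for every $p$ by \cite[Lemma 2.7]{P}. Suppose that each $[c_p]$ admits a normal Lagrangian $A_p\lhd G_p$; by Lemma \ref{abelian} every $A_p$ is abelian, and $|A_p|=\sqrt{|G_p|}$. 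Then $A:=\prod_p A_p$ is normal and abelian in $G$ with $|A|=\prod_p\sqrt{|G_p|}=\sqrt{|G|}$, and it is isotropic for $[c]$: a class in $H^2(G,\C^*)$ vanishes once it restricts trivially to every Sylow subgroup (transfer), and the Sylow $p$-subgroup of $A$ is $A_p$, on which $\mathrm{res}^G_{A_p}[c]=\mathrm{res}^{G_p}_{A_p}[c_p]=0$. So $A$ is a normal Lagrangian for $[c]$, and it remains to treat a single $p$-group $G$, of order $|G|=p^{2n}$ (a perfect square, since $G$ is of central type). The hypothesis that $|G|$ is free of $8$-th powers reads $p^8\nmid|G|$, i.e.\ $2n\le 7$, whence $n\le 3$ and $|G|\in\{p^2,p^4,p^6\}$.

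Next I would run the group-theoretic form of \S\ref{indu}. Choose $z\in Z(G)$ of order $p$; since $z$ is central, $g\mapsto\alpha_c(z,g)$ is a homomorphism $G\to\mu_p$, and it is nontrivial because $[c]$ is non-degenerate, so $H:=K_z(\alpha_c)$ is normal of index $p$ and contains $z$. Just as over a vector space, $\langle z\rangle$ lies in the radical of $\alpha_c|_H$, and $\alpha_c|_H$ is inflated from an induced, again non-degenerate, class $[\overline{c}]$ on $\overline{G}:=H/\langle z\rangle$, which is once more a $p$-group of central type, now of order $p^{2n-2}$. By induction $[\overline{c}]$ has a normal Lagrangian $\overline{A}\lhd\overline{G}$, and its preimage $A\le H$ contains $\langle z\rangle$, is isotropic for $[c]$, is normal in $H$, and has order $p\cdot p^{n-1}=p^n=\sqrt{|G|}$. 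The remaining point is to promote $A\lhd H$ to $A\lhd G$: the quotient $G/H\cong\Z/p$ acts by conjugation on $\overline{G}$ (the central $\langle z\rangle$ being $G$-invariant), permuting the normal Lagrangians of $[\overline{c}]$, and if $\overline{A}$ can be chosen fixed by this action then its preimage is normal in $G$. This is precisely the bookkeeping Proposition \ref{smallisotropic} carries out: by following the available choices through the iteration it records a lower bound for the order of a normal isotropic subgroup produced this way, and that bound equals $\sqrt{|G|}$ exactly when $|G|\le p^6$, i.e.\ after one, two, or three iterations. Combined with the reduction above, this yields the first assertion of the theorem.

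For the IYB consequence I would apply Theorem \ref{main2}. Let $A\lhd G$ be a normal Lagrangian for the non-degenerate class $[c]$; then $A$ is abelian (Lemma \ref{abelian}) and $|A|=|Q|=\sqrt{|G|}$ for $Q:=G/A$, so we have an extension of the form \eqref{exten}. Since $A$ is isotropic, $[c]\in\ker(\mathrm{res}^G_A)$, and under the isomorphism of Theorem \ref{main2} the class of $[c]$ in $\ker(\mathrm{res}^G_A)/[\mathrm{im}(\mathrm{inf}^Q_G)]$ corresponds to some $[\pi]\in\mathcal{K}_\beta\subseteq H^1(Q,\ach)$; as $|A|=|Q|$ and non-degeneracy is a property of the class modulo inflations, $[\pi]$ is bijective, that is, $Q$ is an IYB group.

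The hard part will be Proposition \ref{smallisotropic}: showing that through the iterated passage $G\rightsquigarrow\overline{G}$ one can keep choosing the Lagrangian of the quotient stable under the pertinent $\Z/p$-action, so that its pullback stays normal in $G$. This is automatic for one step and survives a second; with some care it survives a third (the $p^6$ case), but there is no reason for it to persist through a fourth — which is exactly why the order is required to be free of $8$-th powers, and, as the example elsewhere in the paper shows, the requirement cannot simply be dropped.
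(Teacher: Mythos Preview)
Your high-level architecture is exactly the paper's: reduce to a single Sylow $p$-group (so $|G|=p^{2n}$ with $n\le 3$), invoke Proposition~\ref{smallisotropic} (via Corollary~\ref{cor}) to obtain a normal abelian isotropic subgroup of order $p^n$, and read off the IYB statement from Theorem~\ref{main2} together with Lemma~\ref{abelian}. The reduction step and the final IYB paragraph are fine.

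Where you go astray is in your description of \emph{what} Proposition~\ref{smallisotropic} actually does. It is \emph{not} the top-down iteration $G\rightsquigarrow \overline{G}=K_z/\langle z\rangle$ that you sketch (that is the method of Proposition~\ref{nn}, which yields isotropic subgroups that are \emph{not} in general normal). Proposition~\ref{smallisotropic} is a bottom-up construction: starting from $A_0=\{e\}$, one enlarges a normal abelian isotropic $A_{i-1}\lhd G$ of order $p^{i-1}$ to one of order $p^i$ by letting $G$ act by conjugation on the twisted subalgebra $\C^cA_{i-1}$, bounding the image via \eqref{fits} by $|\check{A}_{i-1}|\cdot|\mathrm{Aut}(A_{i-1})|_p\le p^{i(i-1)/2}$, and picking an order-$p$ central subgroup of $G/A_{i-1}$ inside the kernel. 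Normality of $A_i$ in $G$ is automatic at every step --- there is no ``$\Z/p$-stability'' obstacle to overcome. Consequently the issue you flag as ``the hard part'' (keeping the pulled-back Lagrangian normal in $G$) does not arise in the paper's argument, and your sketch of that hard part is neither needed nor, as written, complete (you also assert without proof that $[\overline{c}]$ is again non-degenerate). If you simply cite Proposition~\ref{smallisotropic} as a black box and check the numerics $\tfrac{n^2+n-2}{2}<2n$ for $n\le 3$, your proof is correct and identical to the paper's; but the mechanism you attribute to it is not the one it uses.
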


Nevertheless, Question \ref{q} does not always have a positive answer even for nilpotent groups.
In \S\ref{nolagrangian} we exhibit non-degenerate
classes over a family of metabelian $p$-groups of central type. The first use of this family
is to show the following interesting behavior.
\begin{proposition}\label{noextension}
For all primes $p\geq 3$ there exist non-degenerate classes $[c]\in H^2(G,\C^*)$ over
a family of groups $G$ of order $p^8$, which admit both
\begin{enumerate}
\item a normal Lagrangian $H\vartriangleleft G$ (of order $p^4$), and
\item a normal isotropic subgroup of order $p^3$ which is not contained in any normal Lagrangian.
\end{enumerate}
\end{proposition}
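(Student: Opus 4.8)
The plan is to produce everything explicitly from the family of metabelian $p$-groups $G$ of order $p^8$ and the non-degenerate classes $[c]\in H^2(G,\C^*)$ introduced above, where $G$ carries an abelian normal subgroup $A$ of order $p^4$ with quotient $Q=G/A$, and $[c]$ is the class attached (via Theorem~\ref{main2}, or via the formula \eqref{eg} in the split case) to a bijective $1$-cocycle $\pi\in Z^1(Q,\ach)$. Property (1) is then essentially built in: $A\vartriangleleft G$ with $|A|=p^4=\sqrt{|G|}$ and $\mathrm{res}^G_A[c]=0$ by construction, so since $[c]$ is non-degenerate, $A$ is a normal Lagrangian and plays the role of $H$ in part (1). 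Thus the content of the proposition is property (2), for which I must first locate the right normal isotropic subgroup $L$ of order $p^3$, and then show no normal Lagrangian contains it.

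For the first step I would take for $L$ a subgroup of order $p^3$ that is \emph{not} contained in $A$ --- it cannot be, since otherwise $A$ itself would be a normal Lagrangian through $L$ --- typically a ``diagonal'' subgroup built from a $Q$-invariant piece of $A$ together with one further element whose image in $Q$ is prescribed. Normality of $L$ in $G$ is checked by verifying stability under conjugation (the relevant subgroup of $A$ is $Q$-invariant and the chosen coset representative normalizes it). Isotropy of $L$, i.e. $\mathrm{res}^G_L[c]=0\in H^2(L,\C^*)$, I would verify by a direct cocycle computation: using the explicit $2$-cocycle representing $[c]$ (respectively the associated alternating form $\alpha_c$ and the pairing \eqref{pi}) one shows $c|_{L\times L}$ is a coboundary, e.g. by exhibiting a splitting $1$-cochain on $L$, or by placing $L$ inside a (necessarily non-normal) subgroup on which $[c]$ visibly restricts trivially.

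The heart of the argument is the second step. By Lemma~\ref{abelian} a normal isotropic subgroup with respect to a non-degenerate class is abelian, so it suffices to show that \emph{every} normal subgroup $H\vartriangleleft G$ with $L<H$ and $|H|=p^4$ is either non-abelian or satisfies $\mathrm{res}^G_H[c]\neq 0$. Here $|H/L|=p$, so $H/L$ is a central subgroup of order $p$ of $G/L$; hence such $H$ correspond exactly to the lines in $Z(G/L)$, and writing $H=\la L,g\ra$ with $gL\in Z(G/L)$ one has $[g,G]\subseteq L$, while $H$ is abelian precisely when $g\in C_G(L)$. So the key claim to establish, by an explicit commutator computation in the presentation of $G$, is that
\[
\{\,g\in G:[g,G]\subseteq L\,\}\ \cap\ C_G(L)\ =\ L,
\]
i.e. that the only elements of $G$ that both centralize $L$ and are central modulo $L$ are those already in $L$; this forces every normal $H$ of order $p^4$ above $L$ to be non-abelian, hence not isotropic. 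In the (anticipated borderline) cases where an abelian normal $H\supseteq L$ does exist, I would instead rule it out cohomologically --- e.g. by the cup-product criterion of Theorem~\ref{main2} applied to the extension $1\to H\to G\to G/H\to 1$, or by exhibiting an element of $H$ on which the alternating form $\alpha_c$ is non-trivial.

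I expect this last step --- and specifically the displayed intersection identity --- to be the real obstacle: properties (1) and the isotropy of $L$ reduce to bookkeeping with the presentation and the cocycle formulas, but the negative statement requires genuine control of the commutator structure of $G$ around $L$, and an explanation of why normality collapses every ``obvious'' abelian enlargement of $L$ back onto $L$. Along the way I would also track where the hypothesis $p\geq 3$ enters --- presumably in the existence of the bijective $1$-cocycle $\pi$ (or of the requisite unipotent $Q$-action on $A$), which typically fails for $p=2$ --- and note that the order $p^8$ is consistent with the lower bounds of \S\ref{Lagrangian} (Proposition~\ref{smallisotropic}) and with the sharpness of Theorem~\ref{positive}.
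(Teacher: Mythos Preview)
Your plan rests on a misreading of the family $G_n$. In the paper's construction one takes $n=3$, so $A=A_3\cong(\Z_p)^3$ has order $p^3$, not $p^4$; the normal subgroup of order $p^6$ is $H=A\times\ach$, and $G=G_3=H\rtimes\la R,S\ra$ has order $p^8$. Consequently the roles of parts (1) and (2) are exactly reversed from what you propose. The normal isotropic subgroup of order $p^3$ in part (2) is $A$ itself; it is isotropic because $c_\pi^{(r)}$ restricts trivially to $A$ by the very shape of \eqref{eg}, and it is normal since $H\lhd G$ and $\la R,S\ra$ stabilizes $A$. Part (1), by contrast, is \emph{not} ``built in'': the construction does not hand you a Lagrangian of order $p^4$, and one has to exhibit one explicitly. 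The paper produces $L=\la x_1,x_2,\chi_3,S\ra$ and checks directly that it is normal and isotropic; your outline contains no candidate for this step.

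For part (2) your strategy is in the right spirit but can be streamlined considerably. Rather than classifying all normal $K\lhd G$ of order $p^4$ containing $A$ and proving the displayed intersection identity, the paper argues via the conjugation action $\eta:G\to\mathrm{Aut}(\C^{c}A)$ of \S\ref{conj}: one checks that $U_g$ acts nontrivially on $\C^{c}A$ for every $g\in G\setminus A$ (this is precisely where $n>2$ is used, since for $n\leq 2$ the operator $S=1+T^2$ acts trivially on $A$), so $\ker(\eta)=A$. Now any normal Lagrangian $K\supset A$ is abelian by Lemma~\ref{abelian} and isotropic, hence each $g\in K$ centralizes $A$ and induces the trivial character in $\ach$; by \eqref{fits} this forces $K\subseteq\ker(\eta)=A$, a contradiction. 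This replaces your commutator computation and also clarifies why normality is essential. Finally, the hypothesis $p\geq 3$ enters not through the $1$-cocycle $\pi$ (which is just the identity on $\ach$) but through the requirement $n\leq p$ needed so that $R=1+T$ and $S=1+T^2$ have order $p$; for $n=3$ this is exactly $p\geq 3$.
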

The second use is for a negative answer to Question \ref{q}.

\begin{proposition}\label{nolag}
For all primes $p\geq 5$ there exist non-degenerate classes $[c]\in H^2(G,\C^*)$ over
a family of groups $G$ of order $p^{12}$ which admit no normal Lagrangian.
\end{proposition}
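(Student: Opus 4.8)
The plan is to produce an explicit family of metabelian $p$-groups $G$ of order $p^{12}$, to equip each with a non-degenerate class $[c]\in H^2(G,\C^*)$, and then to rule out the existence of a normal Lagrangian by a counting/structure argument on the normal subgroups of order $p^6$. The starting point is the family of central-type $p$-groups constructed in \S\ref{nolagrangian}; I would take $G$ to be a suitable ``iterated'' or ``doubled'' version of the order-$p^8$ example appearing in Proposition \ref{noextension}, engineered so that every normal isotropic subgroup of order $p^6$ fails to be isotropic. Concretely, I would realize $G$ as an extension $1\to A\to G\to Q\to 1$ with $A$ abelian of order $p^6$ but arranged so that $A$ itself is \emph{not} isotropic (or is isotropic but degenerate on a complement), and then show that $A$ is forced up to $G$-conjugacy/automorphism among the candidate normal subgroups of that order.

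The key steps, in order, are as follows. First, fix the presentation of $G$ and verify it is a $p$-group of order $p^{12}$ of central type, exhibiting the non-degenerate cocycle $c$ explicitly — here I would use the correspondence of Theorem \ref{main2}, choosing a bijective $1$-cocycle datum $\pi\in Z^1(Q,\ach)$ relative to a carefully chosen $[\beta]\in H^2(Q,A)$ with $[\beta]\cup[\pi]=0$, which simultaneously produces $G$ and certifies non-degeneracy of $[c]$. Second, enumerate the normal subgroups $H\vartriangleleft G$ with $|H|=p^6=\sqrt{|G|}$: using nilpotency, every such $H$ contains the appropriate term of the lower central series, and one reduces to counting normal subgroups of the (small) quotient by that term, so this is a finite, mechanical check. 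Third, for each such $H$, show res$^G_H[c]\neq 0$; by the alternating-form description this amounts to producing, for a generator or a specific element $h\in H$, a commuting element $h'\in C_H(h)$ with $c(h,h')\neq c(h',h)$, i.e.\ checking that $\alpha_c$ restricted to $H$ has nontrivial radical or is not even alternating on $H$. By Lemma \ref{abelian}, any normal isotropic subgroup is abelian, so one may discard every nonabelian normal $H$ of order $p^6$ immediately; this typically cuts the list dramatically and is why $p^{12}$ (rather than $p^8$) is needed — there must be enough ``room'' to force all abelian normal subgroups of the Lagrangian order to be $\alpha_c$-degenerate.

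The main obstacle I expect is the third step: controlling the restriction of $[c]$ to \emph{all} abelian normal subgroups of order $p^6$ at once, rather than to a single well-chosen one. The natural strategy is structural rather than case-by-case — identify an invariant of $G$ (e.g.\ the isomorphism type of $Z(G)$, the position of $[G,G]$, or the rank of the image of $\pi_{[c]}$ on any abelian normal subgroup) that obstructs isotropy, and show every abelian normal subgroup of the right order either meets the center too much or too little to be a Lagrangian. If a fully uniform argument is elusive, the fallback is to note that abelian normal subgroups of a $p$-group of order $p^{12}$ containing a fixed central subgroup form a manageable list up to $\mathrm{Aut}(G)$, and to check $\alpha_c$-degeneracy on representatives; the restriction of $c$ being inflated-trivial along the projection used to define $\pi_c$ (the ``mutually inverse'' property following \eqref{pi}) keeps these computations tractable. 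Finally, the consequence for IYB groups is immediate: were a normal Lagrangian to exist, Theorem \ref{main2} would exhibit $G/A$ as IYB with a bijective datum from $[c]$, so the non-existence statement is exactly the failure of that conclusion, and no further argument is needed there.
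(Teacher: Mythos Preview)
Your proposal has a genuine logical gap in the construction step. You plan to produce the non-degenerate class $[c]$ via Theorem \ref{main2}, choosing a bijective $\pi\in Z^1(Q,\ach)$ for an extension $1\to A\to G\to Q\to 1$ with $|A|=p^6$. But Theorem \ref{main2} only yields classes in $\ker(\mathrm{res}^G_A)$: the output $[c]$ automatically has $A$ isotropic. Since $|A|=p^6=\sqrt{|G|}$ and $A\lhd G$ is abelian, $A$ would itself be a normal Lagrangian for $[c]$, and the proposition would be false for the very class you constructed. Your parenthetical alternative ``$A$ is isotropic but degenerate on a complement'' does not escape this: isotropic of order $\sqrt{|G|}$ already means Lagrangian. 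So the construction route through Theorem \ref{main2} is self-defeating and must be abandoned.

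The paper avoids this by \emph{not} building $[c]$ from a normal subgroup of order $p^6$ at all. It takes $G=G_5=H\rtimes(\Z_p\times\Z_p)$ with $H=A\times\check A$ and $|A|=p^5$, builds a non-degenerate $c_\pi$ on $H$ from the identity $1$-cocycle $\pi:\check A\to\check A$, and extends it by hand to $c_\pi^{(r)}$ on $G$ using an explicit twist by a primitive $p$-th root of unity (equation \eqref{extcoc}); non-degeneracy is checked via the irreducible projective representation, not via Theorem \ref{main2}. The non-existence of a normal Lagrangian is then a two-line commutator argument, not an enumeration: if $L\lhd G$ had order $p^6$, then $L\cap H$ is isotropic in $H$ so $|L\cap H|\le p^5$ by Proposition \ref{notnormal1}, hence some $g=hR^{l_1}S^{l_2}\in L\setminus H$; but normality forces $[H,g]\subset L\cap H$, and since $R^{l_1}S^{l_2}-I$ has rank $\ge 3$ on the $5$-dimensional $A$ (this is where $n=5$ and $p\ge 5$ matter), one gets $\dim_{\mathbb F_p}(L\cap H)\ge 6$, a contradiction. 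Your enumeration-of-normal-subgroups plan could in principle be made to work once the construction is fixed, but it is far less efficient than this rank obstruction.
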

We remark that it is fairly easy to find examples of non-degenerate classes $[c]\in H^2(G,\C^*)$
which admit no Lagrangian at all
(by Proposition \ref{notnormal}, $G$ cannot be nilpotent).
For example, consider the group $G=\Z_3\times (\Z_3\ltimes (\Z_2\times\Z_2))$ of order 36,
where the action of $\Z_3$ is given by its embedding in
Aut$(\Z_2\times \Z_2)$.
It turns our that this non-nilpotent group is of central type.
However, it can easily be seen that this group has no normal subgroup of order 6.

\end{section}

\section{Conjugation in twisted group algebras}\label{conj}
Non-degeneracy has a useful interpretation in the language of twisted group algebras.
We briefly recall the structure of such an algebra $\C^cG$. Let $c\in Z^2(G,\C^*)$ be a 2-cocycle. Then
$$\C^cG={\rm Span}_{\C}\{U_g\}_{g\in G},$$
where the multiplication is defined by
$$U_g\cdot U_h=c(g,h)\cdot U_{gh},\ \ g,h\in G.$$
For any subgroup $H<G$, the restriction of the 2-cocycle $c$ to $H$ gives rise to a sub-twisted group algebra
Span$_{\C}\{U_g\}_{g\in H}$ which we denote by $\C^cH$.

By a generalization of Maschke's theorem, $\C^cG$ is always semisimple.
That is $$\C^cG\simeq \oplus_{i=1}^rM_{n_i}(\C).$$
The projections of $\C^cG$ onto the matrix algebras $M_{n_i}(\C)$ $(1\leq i\leq r)$ correspond to the
irreducible $c$-representations of $G$ (of dimensions $n_i\leq \sqrt{|G|}$).
Furthermore, $r=$dim$_{\C}Z(\C^cG)$
is equal to the number of conjugacy classes of {\it regular} elements, that is the elements $g\in G$
such that with the notation of \eqref{perp}, $K_g(\alpha_c)=C_G(g)$
\cite[Theorem 2.4]{NvO}. In particular,
\begin{lemma}\label{tfae}
Let $c\in Z^2(G,\C^*)$. Then the following are equivalent.
\begin{enumerate}
\item $c$ is non-degenerate.
\item $G$ admits a unique irreducible $c$-representation (up to equivalence).
\item $G$ admits an irreducible $c$-representation of dimension $\sqrt{|G|}$.
\item dim$_{\C}Z(\C^cG)=1$.
\item $\C^cG$ is simple.
\end{enumerate}
\end{lemma}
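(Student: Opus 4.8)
The plan is to reduce the whole statement to the two facts recalled immediately above it: that $\C^cG$ is semisimple with Wedderburn decomposition $\C^cG\simeq\bigoplus_{i=1}^r M_{n_i}(\C)$, the $r$ blocks being in bijection with the inequivalent irreducible $c$-representations of $G$ (of dimensions $n_i\le\sqrt{|G|}$), and that by \cite[Theorem 2.4]{NvO} the number $r=\dim_\C Z(\C^cG)$ of blocks equals the number of conjugacy classes of \emph{regular} elements of $G$, where $g$ is regular when $K_g(\alpha_c)=C_G(g)$. Granting these, I would establish the equivalences in the order $(1)\Leftrightarrow(4)$, then $(4)\Leftrightarrow(5)\Leftrightarrow(2)$, and finally $(2)\Leftrightarrow(3)$.

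For $(1)\Leftrightarrow(4)$ the first step is to observe that the identity $e\in G$ is always regular. Indeed, specializing the $2$-cocycle identity for $c$ (trivial $G$-action on $\C^*$) to the triples $(g,e,e)$ and $(e,e,k)$ yields $c(g,e)=c(e,e)=c(e,k)$ for all $g,k\in G$; since $geg^{-1}=e$ and $ege^{-1}=g$, it follows from \eqref{altform} that $\alpha_c(e,g)=c(g,e)-c(e,g)=0$ and $\alpha_c(g,e)=c(e,g)-c(g,e)=0$ for every $g$, so $K_e(\alpha_c)=C_G(e)=G$. As $\{e\}$ is a conjugacy class by itself, the number of conjugacy classes of regular elements is $1$ exactly when $e$ is the only regular element, i.e.\ exactly when $K_g(\alpha_c)=C_G(g)$ forces $g=e$ — which is precisely the definition of non-degeneracy of $c$. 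Combining this with the equality $r=\dim_\C Z(\C^cG)$ and the cited count gives $(1)\Leftrightarrow(4)$.

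The remaining equivalences are bookkeeping with the Wedderburn decomposition. Since the $r$ matrix blocks correspond to the inequivalent irreducible $c$-representations, the condition $r=1$ expresses simultaneously $(2)$, $(4)$ (because $\dim_\C Z(\bigoplus_i M_{n_i}(\C))=r$), and $(5)$ (simplicity of $\C^cG$). Finally, comparing dimensions gives $|G|=\dim_\C\C^cG=\sum_{i=1}^r n_i^2$: if $r=1$ then $n_1^2=|G|$, so $G$ has an irreducible $c$-representation of degree $\sqrt{|G|}$, which is $(2)\Rightarrow(3)$; conversely, since each $n_i\le\sqrt{|G|}$, the presence of a block with $n_j=\sqrt{|G|}$ forces $\sum_{i\ne j}n_i^2=0$, hence $r=1$, giving $(3)\Rightarrow(2)$. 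This closes the circle. I do not expect a genuine obstacle here; the one point needing a little care is the normalization-free verification that $e$ is regular, so that the conclusion ``$r=1$'' extracted from \cite[Theorem 2.4]{NvO} translates cleanly into the stated non-degeneracy condition and not merely into ``there is a single conjugacy class of regular elements.''
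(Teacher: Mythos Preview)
Your proof is correct and follows exactly the route the paper intends: the lemma is stated without proof precisely because it is meant to be read off from the two facts recorded just before it (semisimplicity/Wedderburn decomposition of $\C^cG$ and the count $r=\dim_\C Z(\C^cG)=$ number of conjugacy classes of regular elements from \cite[Theorem~2.4]{NvO}), and you have spelled out that deduction carefully, including the one nontrivial check that $e$ is always regular so that $r=1$ matches the non-degeneracy condition.
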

For every normal subgroup $A\lhd G$, there is an action of $G$ on the sub-twisted group algebra
$\C^cA$ by conjugation:
\begin{equation}\label{act}
g(U_h)=U_g U_hU^{-1}_g=c(g,h)\cdot c^{-1}(ghg^{-1},g)\cdot U_{ghg^{-1}},\ \ g\in G, h\in A.
\end{equation}
In particular, $G$ acts on the set of central primitive idempotents of $\C^cA$.
\begin{lemma}\label{abelian}
Let $[c]\in H^2(G,\C^*)$ be a non-degenerate class and let $A\vartriangleleft G$ such that res$^G_A[c]=0$.
Then $A$ is abelian.
\end{lemma}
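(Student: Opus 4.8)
The plan is as follows. Since $\mathrm{res}^G_A[c]=0$, the restricted cocycle $c|_{A\times A}$ is a coboundary, so the twisted group algebra $\C^cA$ is isomorphic to the ordinary group algebra $\C A$; in particular $\C^cA$ is semisimple, it carries a one-dimensional representation (transport the augmentation of $\C A$, or equivalently note that a trivializing cochain $t\colon A\to\C^*$ with $c(a,b)=t(a)t(b)t(ab)^{-1}$ gives the algebra map $U_a\mapsto t(a)$), and writing its Wedderburn decomposition as $\C^cA\cong\bigoplus_{i=1}^k M_{d_i}(\C)$ at least one $d_i$ equals $1$. Let $f_1,\dots,f_k$ be the corresponding central primitive idempotents, so that $Z(\C^cA)=\bigoplus_i\C f_i\cong\C^k$.

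The heart of the argument is to prove that the conjugation action of $G$ on $\C^cA$ from \eqref{act} is transitive on $\{f_1,\dots,f_k\}$. First I would check that $Z(\C^cA)^G\subseteq Z(\C^cG)$: if $z\in Z(\C^cA)$ then $z$ commutes with every $U_a$, $a\in A$, and if in addition $g(z)=U_gzU_g^{-1}=z$ for all $g\in G$, then $z$ commutes with every $U_g$ as well, hence $z\in Z(\C^cG)$. Because $[c]$ is non-degenerate, Lemma~\ref{tfae} gives $Z(\C^cG)=\C\cdot 1$, so $Z(\C^cA)^G=\C\cdot 1$. Now $G$ acts on the commutative semisimple algebra $Z(\C^cA)\cong\C^k$ by $\C$-algebra automorphisms, i.e.\ by permutations of the $k$ primitive idempotents (these are intrinsically the minimal nonzero idempotents); the dimension of the invariant subalgebra of such a permutation action equals the number of orbits, so it is one-dimensional precisely when the action is transitive.

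Transitivity then closes the argument. Conjugation by $g\in G$ is an algebra automorphism of $\C^cA$ carrying the block $f_i\C^cA\cong M_{d_i}(\C)$ isomorphically onto the block $g(f_i)\C^cA$, so $d_i=d_{\sigma(i)}$ whenever $g(f_i)=f_{\sigma(i)}$; transitivity forces all the $d_i$ to coincide. Since one of them is $1$, every $d_i$ equals $1$, so $\C^cA$ is commutative. Finally, $U_aU_b=U_bU_a$ for all $a,b\in A$ means $c(a,b)U_{ab}=c(b,a)U_{ba}$, and since the $U_x$ form a basis this forces $ab=ba$, whence $A$ is abelian.

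The step I expect to be the crux is the inclusion $Z(\C^cA)^G\subseteq Z(\C^cG)$ together with the resulting transitivity on the block idempotents: this is exactly where normality of $A$ is used (so that conjugation preserves $\C^cA$) and where non-degeneracy of $[c]$ is fed in. The remaining points — semisimplicity, the existence of a one-dimensional summand coming from $\mathrm{res}^G_A[c]=0$, and the fact that algebra automorphisms preserve block sizes — are routine structure theory.
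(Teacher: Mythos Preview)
Your proof is correct and follows essentially the same route as the paper's: both use the $G$-conjugation action on the central primitive idempotents of $\C^cA$, invoke non-degeneracy via Lemma~\ref{tfae} to force transitivity, and conclude from the existence of a one-dimensional block (coming from $\mathrm{res}^G_A[c]=0$) that all blocks are one-dimensional. The only cosmetic difference is in how transitivity is extracted: the paper picks an orbit $E$, observes that $\sum_{e\in E}e$ is a central idempotent of the simple algebra $\C^cG$ and hence equals $1$, while you phrase the same thing as $\dim_{\C} Z(\C^cA)^G=1$ equals the number of orbits; your inclusion $Z(\C^cA)^G\subseteq Z(\C^cG)$ is exactly what makes the paper's orbit sum central in $\C^cG$.
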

\begin{proof}
Let $E=\{e_1,\ldots,e_r\}$ be an orbit of the central primitive idempotents of $\C^cA$ under the above $G$-action.
Then $\sum_{j=1}^{r}e_{j}$ is a central idempotent in the simple algebra $\C^cG$,
and is hence equal to 1. Consequently, $E$ is the set of all central primitive idempotents of $\C^cA$.
The transitivity of the $G$-action implies that the simple components of $\C^cA$ are the same dimension.
Since the restriction of $[c]$ to $A$ is trivial, the twisted group algebra $\C^cA$ is isomorphic to the group
algebra $\C A$ and thus has a simple component of dimension 1.
It follows that all the irreducible representations of $A$ are 1-dimensional. This implies that $A$ is abelian.
\end{proof}
Let $[c]\in H^2(G,\C^*)$ be any cohomology class and let $A\vartriangleleft G$.
We describe the $G$-action by conjugation on the sub-twisted group algebra $\C^cA$
\begin{equation}\label{eta}
\eta:G\to \text{Aut}(\C^cA).
\end{equation}
By \eqref{act}, the algebra automorphism $\eta(g)$ sends a basis element $U_a \ \ (a\in A)$ to
~$c(g,a)\cdot c^{-1}(gag^{-1},g)\cdot U_{\phi_g(a)}$,
where $\phi_g(a)={gag^{-1}}$ is a group automorphism of $A$.
Then the image $\eta(G)$ embeds into the subgroup $\mathcal{G}_c$ of algebra automorphisms of $\C^cA$
which send each basis
element $U_a$ to a scalar multiple of $U_{\phi(a)}$ for some $\phi\in$Aut$(A)$.
Suppose that $g\in G$ induces the trivial group automorphism of $A$ by conjugation, that is $g\in \bigcap_{a\in A}C_G(a)$.
Then for every $a\in A$, $$U_gU_aU_g^{-1}=c(g,a)\cdot c^{-1}(a,g)\cdot U_{gag^{-1}}=\alpha_c(a,g)U_a,$$
where $\alpha_c$ is the alternating form associated to $c$
(see \eqref{altform}). Since $\alpha_c(-,g)\in\check{A}=$Hom$(A,\C^*)$,
it follows that $\mathcal{G}_c<$Aut$(\C^cA)$ fits into an exact sequence of the form
\begin{equation}\label{fits}
1\rightarrow B\rightarrow \mathcal{G}_c\rightarrow {\rm Aut}(A),
\end{equation}
where $B$ is a subgroup of $\check{A}$.
With the above notation, assume that $A$ is isotropic with respect to $[c]\in H^2(G,\C^*)$.
Then for any $g\in\ker(\eta)$, the subgroup $A\cdot\la g\ra<G$, generated by $A$ and $g$ is isotropic as well
(but not necessarily normal).
We make use of this observation in the sequel.

\begin{section}{Detecting Isotropic Subgroups}\label{Lagrangian}
\subsection{}
In this section we imitate the linear procedure described in \S\ref{indu} in group-theoretic terms.
By that we construct an isotropic subgroup $H<G$ (not necessarily normal) of order $\sqrt{|G|}$
with respect to any cohomology class $[c]\in H^2(G,\C^*)$ over a nilpotent group $G$.
This verifies Proposition \ref{notnormal} when $[c]$ is non-degenerate.
Clearly, since a finite nilpotent group $G$ is a direct product of its $p$-Sylow subgroups $P$,
then $ H^2(G,\C^*)=\oplus H^2(P,\C^*)$ and hence it is sufficient to assume that
$G$ itself is a $p$-group.
\begin{proposition}\label{nn}
Let $[c]\in H^2(G,\C^*)$ be any cohomology class over a $p$-group $G$ of order $\geq p^{2m}$.
Then $[c]$ admits an isotropic subgroup (not necessarily normal) of order $p^m$.
\end{proposition}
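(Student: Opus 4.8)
The plan is to mimic the linear-algebra induction of \S\ref{indu}, replacing ``choose a nonzero vector'' with ``choose a central element'' --- which is exactly why we restrict to $p$-groups. I proceed by induction on $|G|$, the base cases $|G|=p^{2m}$ with $m=0$ being trivial. So assume $|G|=p^k$ with $k\geq 2m$ and $k\geq 2$, and pick a central element $z\in Z(G)$ of order $p$; set $N:=\langle z\rangle$, a central (hence normal) subgroup of order $p$. Since $N$ is central it is automatically isotropic with respect to any class (the associated form restricted to $N$ is alternating on a cyclic group, hence zero, and $H^2(N,\C^*)=0$ anyway as $N$ is cyclic). Form the map $\pi_c\colon Q=G/N\to\check N$ of \eqref{pi}, i.e. $\langle\pi_c(gN),z^j\rangle=\alpha_c(g,z^j)$; because $z$ is central this is well defined and is a homomorphism $G\to\check N\cong\Z/p$ (it factors through $G/N$ by the cocycle identity, exactly as in the remark following \eqref{altform} that $\alpha_c(g,h)$ for commuting $g,h$ depends only on $[c]$). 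Its kernel $G_1:=\ker(\pi_c)$ is a subgroup of $G$ containing $N$, of index $1$ or $p$ in $G$.

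The key structural claim --- the analogue of the assertion in \S\ref{indu} that $\alpha|_{\ker\pi_\alpha}$ is inflated from $V/W$ --- is that $\mathrm{res}^G_{G_1}[c]$ lies in the image of inflation from $\bar G:=G_1/N$. I would prove this using the machinery of \S\ref{conj}: $N\lhd G_1$ is a central isotropic subgroup, so by the discussion around \eqref{fits}--\eqref{eta}, the conjugation action $\eta\colon G_1\to\mathrm{Aut}(\C^cN)$ is trivial ($N$ is central, so every $\phi_g$ is the identity, and the scalar $\alpha_c(a,g)$ vanishes precisely because $g\in G_1=\ker\pi_c$ means $\alpha_c(a,g)=0$ for all $a\in N$). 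Hence $\C^cN\cong\C N$ sits centrally inside $\C^cG_1$, which forces $[c]|_{G_1}$ to come from $\bar G$; concretely, one picks a splitting of $\C^cN$ into characters and shows each character-component of $\C^cG_1$ is a twisted group algebra of $\bar G$, so $\mathrm{res}^G_{G_1}[c]=\mathrm{inf}^{\bar G}_{G_1}[\bar c]$ for some $[\bar c]\in H^2(\bar G,\C^*)$. (Alternatively this is the inflation-restriction five-term exact sequence for $1\to N\to G_1\to\bar G\to 1$ together with $N$ being isotropic, which kills the transgression term.)

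Now $|\bar G|=|G_1|/p$. If $[G:G_1]=p$ then $|\bar G|=p^{k-2}\geq p^{2(m-1)}$, and by induction $[\bar c]\in H^2(\bar G,\C^*)$ admits an isotropic subgroup $\bar H<\bar G$ with $|\bar H|=p^{m-1}$; pulling back, $H:=\omega^{-1}(\bar H)<G_1<G$ where $\omega\colon G_1\to\bar G$ is the quotient map, has order $p^{m}$, and it is isotropic for $[c]$ since $\mathrm{res}^G_H[c]=\mathrm{inf}^{\bar H}_{H}(\mathrm{res}^{\bar G}_{\bar H}[\bar c])=\mathrm{inf}(0)=0$. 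If instead $G_1=G$ (the ``$z$ is in the radical'' case), then $|\bar G|=p^{k-1}\geq p^{2m-1}\geq p^{2(m-1)}$ and the same argument with $[\bar c]=[c]$ viewed on $\bar G=G/N$ produces an isotropic $\bar H<\bar G$ of order $p^{m-1}$; but here I need order $p^m$, so I additionally note that since $|\bar G|\geq p^{2m-1}$ I may instead invoke the induction hypothesis to get an isotropic subgroup of $\bar G$ of order $p^{m}$ directly (as $2m-1\geq 2m-1$, wait: I need $|\bar G|=p^{k-1}\geq p^{2m}$, i.e. $k\geq 2m+1$; this holds unless $k=2m$, in which case $G_1\neq G$ cannot fail --- if $k=2m$ the induction on $|\bar G|=p^{2m-1}$ gives an isotropic subgroup of order $p^{m-1}$, and then pulling back and adjoining a suitable element using the ``$\ker\eta$'' observation at the end of \S\ref{conj} restores the lost factor of $p$). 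Handling this borderline bookkeeping cleanly --- making sure that in every branch the pulled-back subgroup has order exactly $p^m$ and not $p^{m-1}$ --- is the main obstacle; it is dispatched by the remark closing \S\ref{conj} that if $A$ is isotropic and $g\in\ker\eta$ then $A\langle g\rangle$ is again isotropic, which is precisely the device that lets us enlarge an isotropic subgroup by one central-type step whenever the counting leaves room.
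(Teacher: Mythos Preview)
Your approach is essentially the paper's own: pick a central element $z$ of order $p$, pass to the subgroup $K_z$ on which $\alpha_c(-,z)$ vanishes, observe that $\mathrm{res}^G_{K_z}[c]$ is inflated from $K_z/\langle z\rangle$, apply induction to the quotient, and pull back. (The paper cites Read's theorem for the inflation step where you invoke the five-term sequence/conjugation action; and the paper inducts on $m$ rather than on $|G|$, which makes the bookkeeping cleaner --- but these are cosmetic differences.)

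The only genuine problem is your handling of the case $G_1=G$. You write that after finding an isotropic $\bar H<\bar G$ of order $p^{m-1}$ you ``need order $p^m$'' and then launch into a convoluted borderline analysis invoking the $\ker\eta$ remark. This is a phantom difficulty: the preimage $\omega^{-1}(\bar H)$ under $\omega\colon G_1\to G_1/N$ \emph{always} has order $p\cdot|\bar H|=p^m$, since it contains $N$. Your argument in the first case ($[G:G_1]=p$) applies verbatim in the second, and no extra enlargement step is needed. Once you drop the case split (or simply note $|\bar G|=|G_1|/p\geq |G|/p^2\geq p^{2(m-1)}$ uniformly), the proof is complete and matches the paper's exactly.
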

\begin{proof}
The proposition is proven by induction on $m$.
This is true of course for $m=1$
since in that case any subgroup of $G$ that is generated by an element of order $p$
is cyclic and hence isotropic by the vanishing of
the second cohomology of cyclic groups over $\C^*$.
Now, assume that the claim holds for a natural number $m$.
We prove that it is also true for $m+1$. Let $[c]\in H^2(G,\C^*)$ be any class,
where $G$ is a group of order $\geq p^{2(m+1)}$.
Take any central element $x\in Z(G)$ of order $p$ (recall that $p$-groups always have a nontrivial center).
The group homomorphism
$$\begin{array}{rcl}
c_x:G &\rightarrow &\C^*\\
 g&\mapsto & \alpha_c(g,x)=c(x,g)c(g,x)^{-1}.
\end{array}$$
takes its values in the subgroup of the complex $p$-th roots of 1, and hence its kernel $K_x$
(see \eqref{perp}) is either
of index 1 or $p$ in $G$. That is
\begin{equation}\label{indexp}
|K_x|\geq \frac{|G|}{p}.
\end{equation}
Since $x$ is central and res$^G_{K_x}[c]$-regular, then with the notation of \cite[Lemma 4.2]{R}
$\delta($res$^G_{K_x}[c])$ is trivial and hence
we deduce by \cite[Theorem 4.4]{R} that the restriction of $[c]$ to $K_x$
is in the image of the inflation from the quotient $K_x/\la x\ra$.
In other words, there exists a cohomology class $[c']\in H^2(K_x/\la x\ra, \C^*)$ such that
\begin{equation}\label{resinf}
{\rm res}^G_{K_x}[c]={\rm inf}^{K_x/\la x\ra}_{K_x}[c'].
\end{equation}
By \eqref{indexp} we have $$|K_x/\la x\ra|=\frac{|K_x|}{p}\geq \frac{|G|}{p^2}\geq p^{2m},$$
hence by assumption $[c']$ admits an isotropic subgroup $H< K_x/\la x\ra$ of order $p^m$.
Consequently, the pre-image of $H<G/\la x\ra$ under the natural projection $G\to G/\la x\ra$ is
$[c]$-isotropic in $G$ and of order $p^{m+1}$.
\end{proof}

\subsection{} In this section we give a lower bound on normal isotropic subgroups with respect to arbitrary
classes over nilpotent groups.
Again, it suffices to deal with $p$-groups.
\begin{proposition}\label{smallisotropic}
Let $G$ be a group of order $p^{m}$, $p$ a prime number, and let $[c]\in H^2(G,\C^*)$ be any cohomology class.
Then $[c]$ admits an isotropic normal abelian subgroup $A\lhd G$ of order $p^i$, for any $i$ such that
\begin{equation}\label{i}
\frac{i^2+i-2}{2}<m.
\end{equation}
\end{proposition}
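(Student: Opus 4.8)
The plan is to imitate the inductive ``peeling off a central element'' argument of Proposition \ref{nn}, but this time keeping track of normality, so that the resulting isotropic subgroup is not merely isotropic but normal (and hence, by Lemma \ref{abelian}-style reasoning, abelian — though here we do not yet have non-degeneracy, so abelianness will have to be arranged directly from the construction rather than deduced). The key bookkeeping difference is that at each step we lose a factor of $p$ not once but potentially twice: once because the kernel $K_x$ of the homomorphism $c_x\colon G\to\C^*$ may have index $p$, and once because we quotient by the central $\langle x\rangle$. Over $i$ steps this costs roughly $p^{2i}$, which is far worse than the bound \eqref{i} allows; so the real content is that one cannot afford to pay the ``index $p$'' toll at every stage, and instead one must choose the central elements more cleverly. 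I would proceed by induction on $i$, building a chain $1=A_0\lhd A_1\lhd\cdots\lhd A_i\lhd G$ of normal (in $G$) subgroups, with each $A_j$ isotropic abelian of order $p^j$, and maintain as an invariant a lower bound on $|G|$ relative to $|A_j|$ that degrades only by the triangular-number amount in \eqref{i}.

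First I would set up the inductive step. Suppose we have a normal isotropic abelian $A=A_j\lhd G$ of order $p^j$. Consider the $G$-action $\eta\colon G\to\mathrm{Aut}(\C^cA)$ by conjugation, as described around \eqref{eta}–\eqref{fits}; its image lands in $\mathcal{G}_c$, which fits into the exact sequence $1\to B\to\mathcal{G}_c\to\mathrm{Aut}(A)$ with $B\le\check A$. Since $A$ is abelian of order $p^j$, we have $|\check A|=p^j$, and $|\mathrm{Aut}(A)|$ divides the order of $\mathrm{Aut}$ of an abelian $p$-group of rank $\le j$, whose $p$-part is at most $p^{\binom{j}{2}+\cdots}$ — more precisely, the $p$-Sylow of $\mathrm{Aut}(A)$ has order dividing $p^{j(j-1)/2}\cdot(\text{something})$, but the cleanest bound to use is that the image of $\eta(G)$ in $\mathcal{G}_c$ has order dividing $|B|\cdot|\text{$p$-part of }\mathrm{Aut}(A)|\le p^{j}\cdot p^{j(j-1)/2}$. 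Hence $|\ker\eta|\ge |G|/p^{\,j+j(j-1)/2}=|G|/p^{(j^2+j)/2}$. The point made at the very end of Section \ref{conj} is exactly what we need: for any $g\in\ker\eta$, the subgroup $A\langle g\rangle$ generated by $A$ and $g$ is again isotropic. Moreover $\ker\eta$ is normal in $G$ (it is the kernel of a homomorphism from $G$), and it visibly contains $A$; so we may pass to $\ker\eta$ and look for a central element of $\ker\eta/A$ — wait, we need $A_{j+1}$ normal in $G$, not just in $\ker\eta$, so instead I would take $x$ to be an element of $Z(\ker\eta)\cap(\text{something }G\text{-stable})$; concretely, pick $x\in\ker\eta$ whose image in $\ker\eta/A$ lies in $Z(\ker\eta/A)\cap C_G(\ker\eta/A)$, i.e. is central for the whole $G$-action on $\ker\eta/A$ — such an element of order $p$ exists because a $p$-group acting on a nontrivial $p$-group has a nontrivial fixed central element. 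Then $A_{j+1}:=A\langle x\rangle$ has order $p^{j+1}$ (assuming $x\notin A$, which holds as long as $|\ker\eta/A|>1$, i.e. as long as $|G|>p^{(j^2+j)/2}$), is abelian (since $x$ commutes with $A$ in $\ker\eta$ — by $x$ being central there — and has order $p$), is normal in $G$ (by the $G$-stable choice of $x$), and is isotropic by the remark at the end of Section \ref{conj}.

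To close the induction I would verify the arithmetic: to produce $A_i$ we perform $i$ steps, and before the $j$-th step we only need $|G|>p^{(j^2+j)/2}$ for the relevant quotient to be nontrivial; the binding constraint is the last step $j=i$ (wait — re-examining, actually after each step the ambient group shrinks to $\ker\eta_j$, so the available order decreases; but crucially the bound $|\ker\eta_j|\ge|G|/p^{(j^2+j)/2}$ is relative to the \emph{original} $|G|$, not compounded, precisely because at stage $j$ we bound the image of the $G$-action on $\C^cA_j$ in terms of $|A_j|=p^j$ alone). So the single inequality $|G|=p^m>p^{(i^2+i)/2}/p=p^{(i^2+i-2)/2}$, i.e. $\frac{i^2+i-2}{2}<m$, suffices for all $i$ steps to go through and deliver $A_i$ of order $p^i$. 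The main obstacle I anticipate is making precise, and correct, the claim that the degradation is \emph{not} compounded across steps — that is, showing $|\ker\eta_j|$ is bounded below by $|G|$ divided by the order of $\mathcal{G}_{c}$ for $A_j$, uniformly, rather than by a product $\prod_{k\le j}(\dots)$; this hinges on the exact sequence \eqref{fits} giving a genuine bound on $\eta_j(G)$ (not on $\eta_j(\ker\eta_{j-1})$) and on checking that the normal abelian chain we build really does have $A_j$ normal in all of $G$ at every stage, which requires the careful $G$-stable choice of the central element $x$ rather than a merely-central-in-$\ker\eta_{j-1}$ one.
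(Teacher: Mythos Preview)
Your approach is correct and essentially identical to the paper's. The paper streamlines the two points you flag as uncertain: rather than hunting inside $\ker\eta$, one observes that $A_{i-1}\subset\ker\eta$ (abelian and isotropic), factors $\eta$ through $\bar\eta\colon G/A_{i-1}\to\mathrm{Aut}(\C^cA_{i-1})$, and then picks a subgroup of order $p$ in $Z(G/A_{i-1})\cap\ker\bar\eta$ (nonempty since $\ker\bar\eta$ is a nontrivial normal subgroup of the $p$-group $G/A_{i-1}$), which simultaneously guarantees normality in $G$, abelianness, and isotropy of its preimage $A_i$; and the non-compounding is automatic because at each stage one bounds $|\eta(G)|\le|\mathcal{G}_c|\le p^{j(j+1)/2}$ for the full $G$, so the binding inequality is $|G/A_{i-1}|>p^{(i-1)i/2}$ (your ``$|G|>p^{(j^2+j)/2}$'' should read $|G/A_j|>p^{(j^2+j)/2}$), which with $j=i-1$ is exactly $m>(i^2+i-2)/2$.
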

\begin{proof}
We prove a bit stronger result, namely, as long as
$i$ satisfies \eqref{i}, then any normal abelian isotropic subgroup $A_{i-1}$ of order $p^{i-1}$
is contained in a normal abelian isotropic
subgroup of order $p^i$. This is done by induction on $i$ satisfying \eqref{i}.
For $i=1$, the only subgroup $A_{i-1}$ of order $p^{i-1}=1$ is the trivial subgroup $\{e\}<G$, which is clearly isotropic.
The condition $m>\frac{i^2+i-2}{2}=0$ says that $G$ is nontrivial and hence admits a central element $x\in G$ of order $p$.
Then the subgroup $\la x\ra <G$ is an isotropic (being cyclic) normal group of
order $p^1$ and contains  $A_{i-1}$. Assume now that $A_{i-1}\vartriangleleft G$ is isotropic abelian of order $p^{i-1}$.
Then $G$ acts on $\C^cA_{i-1}\simeq \C A_{i-1}$ by conjugation via $\eta:G\to \text{Aut}(\C^cA_{i-1})$ (see \eqref{eta}).
The order of the image $\eta(G)$ can be bounded using the exact sequence \eqref{fits}.
Since $|A_{i-1}|= p^{i-1}$, we have $|\check{A}_{i-1}|=p^{i-1}$ and
$|$Aut$(A_{i-1})|_{p}\leq p^{\frac{(i-1)(i-2)}{2}}$ \cite[\S 1.3]{H}.
By \eqref{fits}
\begin{equation}\label{bound}
|\mathcal{G}_c|\leq p^{i-1+\frac{(i-1)(i-2)}{2}}=p^{\frac{i(i-1)}{2}}.
\end{equation}
Now, $A_{i-1}$ is abelian, and hence it trivially conjugates itself.
Therefore, its image under $\eta$ is actually inside $\check{A}_{i-1}$,
and since it is also isotropic with respect to $c$, it follows that $A_{i-1}\vartriangleleft$ker$(\eta)$,
and $\eta$ thus factors through $$\bar{\eta}:G/A_{i-1}\to \text{Aut}(\C^cA_{i-1}).$$
By \eqref{i} and \eqref{bound} we have
$$|\text{Im}(\bar{\eta})| \leq |\mathcal{G}_c| \leq p^{\frac{i^2-i}{2}}< p^{m-i+1}=\frac{|G|}{|A_{i-1}|}= |G/A_{i-1}|,$$
and so $\bar{\eta}$ has a nontrivial kernel.
Next, since the center of a $p$-group nontrivially intersects any nontrivial normal subgroup,
there exists a central subgroup $\bar{A}<Z(G/A_{i-1})\cap {\rm Ker}(\bar{\eta})$ of order $p$.
Let $A_{i}\lhd G$ be the pre-image of $\bar{A}\lhd G/A_{i-1}$ under the natural projection.
Then it is clear that $A_{i}$ is of order $p^i$ and contained in Ker$(\eta)$.
The latter fact says that $A_i$ trivially conjugates $A_{i-1}$,
and hence it is abelian and induces the trivial character
in $\check{A}_{i-1}$. Consequently $A_{i}$ is isotropic with respect to $c$.\end{proof}

When $m<8$, Proposition \ref{smallisotropic} is already enough to obtain a normal
isotropic subgroup whose order is $\sqrt{|G|}$.
We record this fact as
\begin{corollary}\label{cor}
Let $[c]\in H^2(G,\C^*)$ be any cohomology class, where $G$ is of order $p^{2n}, n<4$.
Then $[c]$ admits a normal isotropic subgroup of order $p^n$.
\end{corollary}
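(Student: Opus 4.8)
The plan is to read off Corollary \ref{cor} as an immediate special case of Proposition \ref{smallisotropic}. Writing the order of $G$ as $p^{2n}$, I would apply that proposition with $m=2n$ and $i=n$: it then produces a normal abelian — in particular isotropic — subgroup $A\lhd G$ of order $p^n$, which is exactly the assertion, provided the numerical hypothesis \eqref{i} is satisfied.

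Thus the whole content of the proof is the verification of \eqref{i} for $i=n$ and $m=2n$. The inequality $\frac{i^2+i-2}{2}<m$ becomes $n^2+n-2<4n$, equivalently $n^2-3n-2<0$. Since the quadratic $t\mapsto t^2-3t-2$ has positive root $\frac{3+\sqrt{17}}{2}\approx 3.56$, this holds precisely for the integers $n\le 3$, which is the stated range $n<4$. Hence Proposition \ref{smallisotropic} applies and yields the desired normal isotropic subgroup of order $p^n=\sqrt{|G|}$.

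There is no genuine obstacle here; the corollary is pure bookkeeping with the quadratic bound \eqref{i}. The one point worth flagging in the write-up is that the bound is tight at exactly this spot: for $n=4$ one has $\frac{i^2+i-2}{2}=9$, which is not less than $m=8$, so the argument collapses — and this failure is real rather than an artifact of the method, since Proposition \ref{nolag} exhibits nilpotent groups of central type (of order $p^{12}$, $p\ge 5$) admitting no normal Lagrangian. So $n<4$ is the natural ceiling for extracting a normal Lagrangian purely from Proposition \ref{smallisotropic}.
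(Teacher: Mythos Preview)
Your proof is correct and is exactly the paper's approach: the corollary is stated immediately after Proposition~\ref{smallisotropic} as the observation that when $m=2n<8$ the bound \eqref{i} with $i=n$ is satisfied, and the paper gives no further argument.

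One small correction to your closing remark: the failure of the method at $n=4$ is \emph{not} known to be real. Proposition~\ref{noextension} in fact shows that the groups of order $p^8$ considered there \emph{do} admit normal Lagrangians; it is only at order $p^{12}$ (Proposition~\ref{nolag}) that normal Lagrangians genuinely disappear. So the gap between $n=4$ and the first true counterexample is an artifact of the crude bound $|\mathrm{Aut}(A)|_p\le p^{(i-1)(i-2)/2}$ used in Proposition~\ref{smallisotropic}, not a reflection of the underlying phenomenon.
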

Theorem \ref{positive} is a consequence of this corollary,
since any nilpotent group is the direct product of its $p$-Sylow subgroups.
\end{section}

\begin{section}{Examples}
In this section we construct non-degenerate classes over a family of groups of central type of the form
$G_n=(\mathbb{Z}_p)^{2n}\rtimes(\mathbb{Z}_p\times\mathbb{Z}_p)$.
In \S\ref{3} we show that the classes over $G_3$ admit a normal isotropic subgroup which is not contained in any normal
Lagrangian (but do admit a normal Lagrangian).
In \S\ref{5} we show that the classes over $G_5$ do not admit a normal Lagrangian at all.
These prove Propositions \ref{noextension} and \ref{nolag} respectively.

Let $A=A_n$ be an $n$-dimensional vector space over the prime field $\mathbb{F}_p$,
where $n$ is some natural number such that $2<n\leq p$.
As an abelian group $A$ is isomorphic to $(\mathbb{Z}_p)^n$, the elementary abelian $p$-group of rank $n$.
Considering $A$, and hence also $\check{A}$, as trivial $\check{A}$-modules, the identity morphism
$$\pi:\check{A}\to \check{A}$$ is clearly a 1-cocycle. Let $H=H_n=A\times \check{A}$.
Then the 2-cocycle $c_{\pi}=c_{n,\pi}\in Z^2(H,\C^*)$ induced by $\pi$ (see \eqref{eg}) has the form
$$c_{\pi}(a_1\chi_1, a_2\chi_2)=\la\chi_1,a_2\ra ^{-1},\ \ a_1,a_2\in A,\ \ \chi_1,\chi_2\in \check{A}.$$
Since $\pi$ is bijective, it follows that $c_{\pi}$ is non-degenerate.
By Lemma \ref{tfae}, the twisted group algebra $\C^{c_{\pi}}H$ is simple,
and there exists a unique irreducible $c_{\pi}$-projective representation of $H$,
which is of dimension $p^n$.
This irreducible representation
$$\nu:\C^{c_{\pi}}H\xrightarrow{\thicksim} \text{End}_{\C}(W).$$
is given explicitly by identification of $\C^{c_{\pi}}H$
with an endomorphism algebra of the following linear space $W$.
Let $W=$Span$_{\C}\{w_a\}_{a\in A}$ be the regular representation of $\C A$, that is
\begin{equation}\label{acta}
U_{a'}\cdot w_a=w_{a'a}
\end{equation} for every
$a,a'\in A$. We also define for every $\chi\in \check{A}$ and $a\in A$
\begin{equation}\label{actchi}
U_{\chi}\cdot w_{a}= \la \chi, a\ra^{-1}w_{a},
\end{equation}
Now, endow $W$ with a $\C^{c_{\pi}}H$-module structure by imposing $U_{a\chi}:=U_aU_{\chi}$ for every $a\chi\in H$.

The next step is to extend $c_{\pi}$ to a non-degenerate 2-cocycle over a group $G_n$ of order $p^{2n+2}$,
which contains $H$ as a normal subgroup.
For that we define the following action of $\mathbb{Z}_p\times \mathbb{Z}_p$ on $H$.
Choose any $\mathbb{F}_p$-basis $\{x_1,\cdots, x_n\}$ of $A$, define
\begin{eqnarray}\begin{array}{rcl}\label{T}
T=T_n:A&\rightarrow &A\\
x_i& \mapsto & \left \{
\begin{array}{cl}
0 &  i=1\\
x_{i-1} & 1<i\leq n,
\end{array}\right.
\end{array}\end{eqnarray}
and extend it $\mathbb{F}_p$-linearly.
With respect to the above basis, $T$ is a nilpotent Jordan block.

Note that the commuting linear operators $R=R_n := 1+T$ and $S=S_n := 1+T^2$ are invertible of order $p$
(here we use the fact that $p\geq n$).
The actions of $R$ and $S$ on $A$ induce the diagonal actions \eqref{diag} on $\check{A}$,
which can be extended in a natural way to an action on $H$ as follows.
\begin{equation}\label{actH}
R(a\chi) := R(a)R(\chi),\ \ S(a\chi) := S(a)S(\chi),\ \ a\in A, \chi\in \check{A}.
\end{equation}
With the action \eqref{actH} we define $$G=G_n:= H\rtimes \la R,S\ra\simeq H\rtimes (\mathbb{Z}_p\times \mathbb{Z}_p). $$
In order to extend the 2-cocycle $c_{\pi}$ to $G$,
note that it is invariant under the action of $\la R,T\ra$ on $Z^2(H,\C^*)$ induced from its action on $H$ \eqref{actH}
(for the definition of the action see \cite[\S III8]{b}).
Indeed, for any $a_1,a_2\in A$ and $\chi_1,\chi_2\in \check{A}$ we have
$$\begin{array}{l}
R(c_{\pi})(a_1\chi_1, a_2\chi_2)=c_{\pi}(R^{-1}(a_1\chi_1), R^{-1}(a_2\chi_2))
=c_{\pi}(R^{-1}(a_1)R^{-1}(\chi_1), R^{-1}(a_2)R^{-1}(\chi_2))\\
=\la R^{-1}(\chi_1),R^{-1}(a_2)\ra ^{-1}
=\la\chi_1,a_2\ra ^{-1}=c_{\pi}(a_1\chi_1, a_2\chi_2),
\end{array}$$
and similarly for $S$.

Let $\zeta=\zeta_p$ be a nontrivial $p$-th root of unity. For any $0\leq r \leq p-1$, define
\begin{eqnarray}\label{extcoc}
\begin{array}{rcl}
c_{\pi}^{(r)}:G\times G &\to & \C^*\\
(h_1R^iS^j,h_2R^kS^l) &\mapsto & c_{\pi}(h_1,R^iS^j(h_2))\zeta^{rjk},\ \ h_1,h_2\in H.
\end{array}
\end{eqnarray}
Using the fact shown above that $R$ and $S$ preserve $c_{\pi}$ as a function,
it is not hard to check that $c_{\pi}^{(r)}\in Z^2(G,\C^*)$ for every $0\leq r \leq p-1$.

We describe the $G$-action by conjugation on the sub-twisted algebra $\C^{c_{\pi}}H$.
Conjugations by $U_R$ and by $U_S$ are automorphisms of the matrix algebra $\C^{c_{\pi}}H$,
and therefore they are the same as conjugation by invertible matrices $M_R$ and $M_S$ respectively.
The matrices $M_R$ and $M_S$ are uniquely determined, up to a scalar, by their
actions on the basis $\{ w_a\}_{a\in A}$ of $W$ as follows.
\begin{equation}\label{RS}
M_R\cdot w_a=w_{R(a)},\ \ M_S\cdot w_a=w_{S(a)},\ \ a\in A.
\end{equation}
Note that the operators $M_R$ and $M_S$ commute.
It is also important to notice that $M_R$ and $M_S$
(or, rather, their scalar multiples) are independent of $0\leq r\leq p-1$.
We claim
\begin{lemma}
The cocycles $c^{(r)}_{\pi}$ are non-degenerate for all $r\neq 0$.
\end{lemma}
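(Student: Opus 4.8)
The plan is to use Lemma~\ref{tfae}: it suffices to show that $\C^{c_\pi^{(r)}}G$ is simple, equivalently that its center is one-dimensional, equivalently (by the count of regular conjugacy classes) that the only $\alpha_{c_\pi^{(r)}}$-regular element of $G$ is the identity. Since we already know $c_\pi$ is non-degenerate on the normal subgroup $H$, the natural approach is to analyse regularity of a general element $g = h R^i S^j \in G$ in two stages: first rule out elements with $(i,j)\neq(0,0)$, then observe that an element of $H$ which is regular in $G$ is in particular regular in $H$ (its centralizer in $H$ is contained in its centralizer in $G$), forcing it to be $e$.

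First I would handle the case $h \in H$, $(i,j) \neq (0,0)$. Here the key tool is the explicit conjugation action on $\C^{c_\pi}H$ described in \eqref{RS}: conjugation by $U_{R^iS^j}$ on the matrix algebra $\C^{c_\pi}H \cong \mathrm{End}_\C(W)$ is implemented by the invertible operator $M_R^i M_S^j$, which sends $w_a \mapsto w_{R^iS^j(a)}$. Since $R = 1+T$ and $S = 1+T^2$ with $T$ a single nilpotent Jordan block of size $n$, the operator $R^iS^j$ acting on $A$ is unipotent and equals the identity on $A$ only when $R^iS^j = 1$ as a linear map; because $R,S$ have order $p \geq n$ and generate a group isomorphic to $\Z_p\times\Z_p$ acting faithfully on $A$ (one checks $R^iS^j = 1 \iff i\equiv j\equiv 0$ using that $T, T^2$ are linearly independent modulo higher powers when $n>2$), the operator $M_R^iM_S^j$ is a non-scalar matrix. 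Therefore conjugation by $U_{h R^iS^j}$ is a non-inner-by-scalar automorphism of $\C^{c_\pi}H$, so there exists $U_{a}\in \C^{c_\pi}H$ (indeed some basis element $U_{a\chi}$ with $a\in A$) not commuting with $U_{hR^iS^j}$ up to scalar; this $a\chi \in H \subset G$ commutes with $hR^iS^j$ in $G$ (as $R^iS^j$ normalizes $H$ and... here one must be slightly careful) — more precisely, I would instead directly exhibit an element $y\in G$ with $y$ commuting with $g$ but $\alpha_{c_\pi^{(r)}}(y,g)\neq 0$, choosing $y$ inside the center-like part: the element $R$ or $S$ itself, using the explicit factor $\zeta^{rjk}$ in \eqref{extcoc}. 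Concretely, $\alpha_{c_\pi^{(r)}}$ evaluated on the commuting pair $(R,S)$ (or appropriate powers) picks up the nonzero term $\zeta^{r}$-type contribution since $r\neq 0$ and $\zeta$ is a primitive $p$th root of unity; tracking the cocycle formula \eqref{extcoc} shows $\alpha_{c_\pi^{(r)}}(R^iS^j, R^kS^l)$ restricted to the subgroup $\la R,S\ra$ is the standard non-degenerate symplectic pairing $\zeta^{r(jk-il)}$ on $\Z_p\times\Z_p$, so no nontrivial element of $\la R,S\ra$ is regular, and for $g = hR^iS^j$ with $(i,j)\neq(0,0)$ one combines this with the $H$-part.

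Second, for $g = h \in H$: if $h$ is $\alpha_{c_\pi^{(r)}}$-regular in $G$ then since $c_\pi^{(r)}$ restricts to $c_\pi$ on $H$ and $C_H(h) \leq C_G(h)$, the element $h$ is $\alpha_{c_\pi}$-regular in $H$; but $c_\pi$ is non-degenerate, so $h = e$.

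The main obstacle I anticipate is the bookkeeping in the first stage: one must verify that the cross-terms mixing the $H$-coordinate $h$ with the $\la R,S\ra$-coordinate in the cocycle \eqref{extcoc} do not conspire to make some $hR^iS^j$ with $(i,j)\neq(0,0)$ regular even though $R^iS^j$ alone is not — i.e. one needs that for every such $g$ there is a genuine commuting partner detecting degeneracy, which requires choosing the partner adapted to $g$ (possibly in $H$ when the $\la R,S\ra$-pairing alone happens to vanish against all of $\la R,S\ra$, which it cannot since that pairing is non-degenerate, so in fact the partner can always be taken in $\la R,S\ra$ — this is why the argument closes cleanly). Once this is organized, simplicity of $\C^{c_\pi^{(r)}}G$ follows and Lemma~\ref{tfae} gives non-degeneracy.
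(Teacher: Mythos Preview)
Your approach---checking directly that no nontrivial element of $G$ is regular with respect to $\alpha_{c_\pi^{(r)}}$---is different from the paper's, which instead uses the representation-theoretic characterization in Lemma~\ref{tfae}(3): the paper shows that any irreducible $c_\pi^{(r)}$-representation $V$ restricts to a sum of copies of the unique $p^n$-dimensional $\C^{c_\pi}H$-module $W$, so $\dim V\in\{p^n,p^{n+1}\}$; if $\dim V=p^n$ then $U_R,U_S$ must act by scalar multiples of the commuting operators $M_R,M_S$ of \eqref{RS}, and comparing $M_RM_SM_R^{-1}M_S^{-1}=I$ with $U_RU_SU_R^{-1}U_S^{-1}=\zeta^r$ forces $r=0$. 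This bypasses all element-by-element centralizer computations.

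Your argument has a genuine gap in the first case. For $g=hR^iS^j$ with $(i,j)\neq(0,0)$ you claim the witnessing partner can always be taken in $\la R,S\ra$, reasoning that the restriction of $\alpha_{c_\pi^{(r)}}$ to $\la R,S\ra$ is the non-degenerate pairing $\zeta^{r(jk-il)}$. But a partner must lie in $C_G(g)$, and $R^kS^l$ commutes with $g=hR^iS^j$ only when $R^kS^l(h)=h$. For generic $h$ this forces $(k,l)\equiv(0,0)$: e.g.\ with $n=5$ and $h=x_5\in A$, one has $R^kS^l(x_5)=x_5+kx_4+lx_3+\cdots$, so $R^kS^l$ fixes $x_5$ only when $k=l=0$. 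Hence for $g=x_5R$ no nontrivial element of $\la R,S\ra$ lies in $C_G(g)$, and your proposed partner does not exist. The non-degeneracy of the pairing on $\la R,S\ra$ is irrelevant here because $\la R,S\ra$ is not normal and elements of it need not centralize $g$.

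Your second case ($g\in H$) is correct. The first case can in fact be repaired by a more careful analysis of $C_G(g)$---one must allow partners of the form $h'R^kS^l$ with $h'$ solving $(1-R^kS^l)h=(1-R^iS^j)h'$ and then track the cross-terms in $\alpha_{c_\pi^{(r)}}$---but this is exactly the ``bookkeeping'' you flagged and then incorrectly dismissed. The paper's Schur-lemma style argument avoids this entirely, which is its main advantage.
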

\begin{proof}
Let $V$ be an irreducible representation of $\C^{c_{\pi}^{(r)}}G$ for some $0\leq r\leq p-1$.
Then as a $\C^{c_{\pi}}H$-module $V\simeq \oplus W$, where $W$ is the unique irreducible $\C^{c_{\pi}}H$-module
(determined by \eqref{acta} and \eqref{actchi}).
Since $V$ is irreducible, the dimension of $V$ is a power of $p$ and does not exceed $\sqrt{|G|}=p^{n+1}$.
Since it is isomorphic to a direct sum of copies of the $p^n$-dimensional space $W$ we obtain dim$_{\C}V$
is either $p^n$ or $p^{n+1}$.

 Assume that dim$_{\C}V=p^n$. This means that the restriction of the irreducible $\C^{c_{\pi}^{(r)}}G$-representation
$V$ to $\C^{c_n}H$ is exactly $W$. In other words, there exist a morphism
$$\nu':\C^{c_{\pi}^{(r)}}G\to \text{End}_{\C}(W)$$ extending $\nu$.
Since $\nu'$ is irreducible and hence surjective when restricted to $\C^{c_{\pi}}H$, it follows by \eqref{RS} that
a necessary condition for $\nu'$ to be a homomorphism is that
$\nu'(U_R)=t_RM_R$ and $\nu'(U_S)=t_SM_S$ for some nonzero scalars $t_R$ and $t_S$
(we use the symbols $U_R$ and $U_S$ for all $0\leq r\leq p-1$).
Now,
$$I_n=M_RM_SM_R^{-1}M_S^{-1}=\nu'(U_RU_SU_R^{-1}U_S^{-1})=\nu'(\zeta^r)=\zeta^rI_n,$$
implying that $r=0$.
Consequently, if $r\neq 0$ then dim$_{\C}V=p^{n+1}=\sqrt{|G|}$. Again by Lemma \ref{tfae},
the cocycles $c^{(r)}_{\pi}$ are non-degenerate for all $r\neq 0$.
\end{proof}
\begin{remark}
For $r=0$, $\nu'$ is indeed a representation of $\C^{c_{\pi}^{(0)}}G$ of dimension $p^n$ by
choosing $t_R=t_S=1$. Then ${c_{\pi}^{(0)}}$ is degenerate.
\end{remark}
\subsection{Proof of Proposition \ref{noextension}}\label{nolagrangian}\label{3}
Certainly, $A$ is an isotropic normal subgroup of $G$ of order $p^n$ with respect to all cocycle $c_{\pi}^{(r)}$.
However, $A$ is not contained in any normal Lagrangian of $G$ with respect to the non-degenerate cocycles
$c_{\pi}^{(r)}, r\neq 0$.
Indeed, by Lemma \ref{abelian} any Lagrangian must be abelian.
However, $U_g$ acts non-trivially on $\C^{c^{(r)}_{\pi}|_A}A$ for all $g\in G\setminus A$
(here we use the assumption that $2<n$, as otherwise $S$ acts trivially on $A$).
In order to complete the proof of Proposition \ref{noextension},
we show that for $n=3$, the cocycles $c^{(r)}_{\pi}$ do admit a normal Lagrangian
(which does not contain the isotropic normal subgroup $A$) for all $r\neq 0$.
With the basis that defines $R$ and $S$ \eqref{T}, let $\chi_3$ be the following character in $\check{A}$.
$$\begin{array}{rcl}
\chi_3: A&\to& \C^*\\
x_1, x_2& \mapsto &1\\
x_3& \mapsto &\zeta_3.
\end{array}$$
Then, as can easily be checked, $L=\langle x_1,x_2,\chi_3,S\rangle$ is a normal Lagrangian with respect to $c^{(r)}_{\pi}$
for all $r\neq 0$.

\subsection{Proof of Proposition \ref{nolag}}\label{nolagrangian}\label{5}
As for the case $n=5$, we show that a normal Lagrangian of $G_5$ does not exist.
Fix $0\leq r \leq p-1$ and assume, by negation, that $c_{\pi}^{(r)}$ admits a isotropic subgroup $L\vartriangleleft G$
of order $p^6$.
The subgroup $L\cap H$ is an isotropic subgroup of $H$ with respect to the non-degenerate cocycle $c_{\pi}$,
and by Proposition \ref{notnormal},
\begin{equation}\label{divisor}
|L\cap H|\leq p^5.
\end{equation}
This means that there exists an element $g\in L\setminus H$.
Write $g=hR^{l_1}S^{l_2}$, where $l_1$ and $l_2$ do not both divide $p$.
Normality of $L$ implies $[H,g]<L$. For any $a\chi\in H$,
$[a\chi,g] = a\chi R^{l_1}S^{l_2}(a\chi)^{-1}$ under the action \eqref{actH}.
But as an ${\mathbb{F}_p}$-linear operator,
 $$\text{rank}R^{l_1}S^{l_2} - I\geq 3$$ for any nontrivial $(l_1,l_2)$. Therefore
$$\dim_{\mathbb{F}_p}L\cap H\geq 3+3 = 6$$
contradicting \eqref{divisor}.

\end{section}

\noindent{\bf Acknowledgement.}
We thank P. Singla and U. Onn for their helpful suggestions.
The third author was supported by the Danish National Research
Foundation (DNRF) through the Centre for Symmetry and Deformation.

\end{document}